\newcommand{\mcm}[3]{\newcommand{#1}[#2]{{\ensuremath{#3}}}} 
\newcommand{\sm}{\setminus}
\newcommand{\se}{\subseteq}
\newcommand{\ct}{^\complement}
\mcm{\tuple}{1}{\langle #1 \rangle}
\mcm{\name}{1}{\ulcorner #1 \urcorner}
\mcm{\Nbb}{0}{\mathbb{N}}
\mcm{\Zbb}{0}{\mathbb{Z}}
\mcm{\Qbb}{0}{\mathbb{Q}}
\mcm{\Rbb}{0}{\mathbb{R}}
\mcm{\Cbb}{0}{\mathbb{C}}
\mcm{\Fbb}{0}{\mathbb{F}}
\mcm{\Bcal}{0}{\cal B}
\mcm{\Ccal}{0}{\cal C}
\mcm{\Dcal}{0}{\cal D}
\mcm{\Ecal}{0}{\cal E}
\mcm{\Fcal}{0}{\cal F}
\mcm{\Gcal}{0}{\cal G}
\mcm{\Hcal}{0}{\cal H}
\mcm{\Ical}{0}{\cal I}
\mcm{\Lcal}{0}{\cal L}
\mcm{\Mcal}{0}{\cal M}
\mcm{\Ncal}{0}{\cal N}
\mcm{\Qcal}{0}{\cal Q}
\mcm{\Pcal}{0}{{\cal P}}
\mcm{\Scal}{0}{{\cal S}}
\mcm{\Tcal}{0}{{\cal T}}
\mcm{\Ucal}{0}{{\cal U}}
\mcm{\Vcal}{0}{{\cal V}}
\mcm{\Wcal}{0}{{\cal W}}
\mcm{\Xcal}{0}{{\cal X}}
\mcm{\Ycal}{0}{{\cal Y}}
\mcm{\Zcal}{0}{{\cal Z}}
\mcm{\Mfrak}{0}{\mathfrak M}
\mcm{\restric}{0}{\upharpoonright}
\mcm{\upset}{0}{\uparrow}
\mcm{\downset}{0}{\downarrow}
\mcm{\onto}{0}{\twoheadrightarrow}
\mcm{\smallNbb}{0}{{\small \mathbb{N}}}
\DeclareMathOperator{\preop}{op}
\mcm{\op}{0}{^{\preop}}
\newcommand{\theoremize}[2]{\newaliascnt{#1}{thm} \newtheorem{#1}[#1]{#2} \aliascntresetthe{#1}}
\theoremstyle{plain}
\newtheorem{thm}{Theorem}[section]
\theoremstyle{definition}
\theoremstyle{plain}
\newcommand{\Rcal}{{\cal R}}
\title{Reconstruction of infinite matroids from their 3-connected minors}
\author{Nathan Bowler \and Johannes Carmesin \and Luke Postle}
\begin{document}
\maketitle
\begin{abstract}
We show that any infinite matroid can be reconstructed from the torsos of a tree-decomposition over its 2-separations, together with local information at the ends of the tree. We show that if the matroid is tame then this local information is simply a choice of whether circuits are permitted to use that end. The same is true if each torso is planar, with all gluing elements on a common face.
\end{abstract}

\section{Introduction}

In \cite{matroid_axioms}, Bruhn, Diestel, Kriesell, Pendavingh and Wollan introduced axioms for infinite matroids 
in terms of independent sets, bases, circuits, closure and relative rank. 
These axioms allow for duality of infinite matroids as known from finite matroid theory, which settles an old problem of Rado. This breakthrough allowed the development of the basic theory of infinite matroids (see for example \cite{RD:HB:graphmatroids, HB:PW:connectivity, NB:JC:PC}). 

In \cite{ADP:decomposition}, Aigner-Horev, Diestel and Postle showed that any (infinite) matroid has a canonical tree-decomposition over its 2-separations. More precisely, a {\em tree-decomposition of adhesion 2} of a matroid $N$ consists of a tree $T$ and a partition $R = (R_t)_{t \in V(T)}$ of the ground set $E$ of $N$ such that for any element $tt'$ of $T$ the partition of the ground set induced by that element is a 2-separation of $N$. At each node $t$ of $T$ this gives a {\em torso}: a matroid on a set consisting of $R_t$ together with some new {\em virtual elements} corresponding to the edges incident with $t$ in $T$. What Aigner-Horev, Diestel and Postle showed is that any connected matroid has a canonical tree-decomposition of adhesion 2 such that all torsos are either 3-connected or else are circuits or cocircuits.

If the matroid $N$ is finite then it can easily be reconstructed from this decomposition, by taking the 2-sum of all the torsos. This is no longer possible for matroids in general. Consider for example the graph $Q$ obtained by taking the 2-sum of a ray of copies of $K_4$, as in \autoref{fig:quircuits2}. The finite-cycle matroid $M_{FC}(Q)$ of $Q$ has as its circuits the edge sets of finite cycles in $Q$. The topological-cycle matroid $M_{TC}(Q)$ has as its circuits the edge sets of finite cycles and double rays in $Q$ (see \cite{RD:HB:graphmatroids} for the definition of topological-cycle matroids of general graphs). Both of these matroids have the same tree-decomposition into torsos, consisting of a ray along which all torsos are the cycle matroid of the graph $K_4$, as in \autoref{fig:quircuits2}.

\begin{figure}[htb]
\begin{center}
 \includegraphics[width=8cm]{./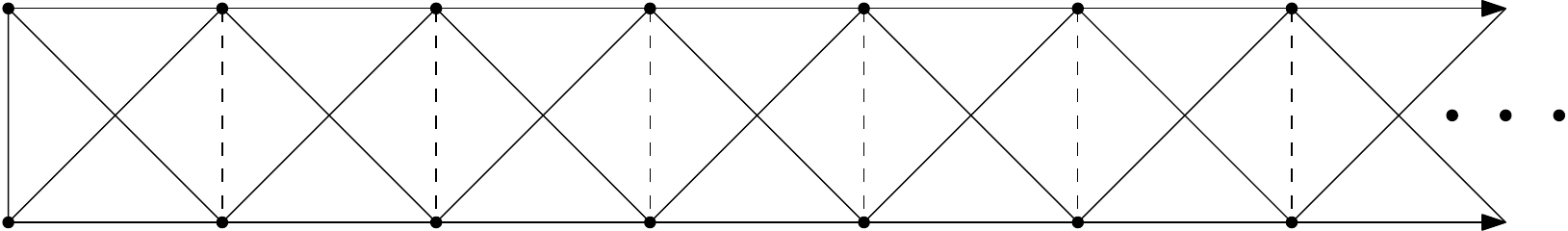}
\end{center} 
\caption{The graph $Q$ as constructed from a ray of copies of $K_4$: solid edges are edges of $Q$, dotted edges are eliminated when we take 2-sums} \label{fig:quircuits2}
\end{figure}

Thus in order to reconstruct these two matroids we would need some additional information distinguishing them: in this case, the choice of whether to allow circuits which go infinitely far along the ray. We shall show that if the matroid we have decomposed is {\em tame} (that is, any intersection of a circuit with a cocircuit is finite) then such a choice, for each end of the tree, is all the extra information that we need.

\begin{thm}\label{tamemain}
Let $R = (R_t)_{t \in V(T)}$ be a tree-decomposition of adhesion 2 of a tame matroid $N$. Let $\Psi$ be the set of ends $\omega$ of $T$ which are {\em used by} circuits of $N$, in the sense that there is a circuit $o$ of $N$ for which $\omega$ lies in the closure of $\{t \in V(T) | o \cap R_t \neq \emptyset\}$. Then $N$ is uniquely determined by the tree $T$, the torsos $(N_t | t \in T)$ and the set $\Psi$.
\end{thm}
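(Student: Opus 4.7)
The plan is to characterise the circuits of $N$ purely in terms of the tree $T$, the torsos $(N_t)_{t \in V(T)}$ and the set $\Psi$; the theorem then follows at once, since a matroid is determined by its family of circuits. I will call a nonempty set $S \subseteq E(N)$ a \emph{candidate circuit} if there is a connected subgraph $H \subseteq T$ with vertex set exactly $\{t \in V(T) : S \cap R_t \neq \emptyset\}$ such that, for every $t \in V(H)$, the set $(S \cap R_t) \cup \{v_{tt'} : tt' \in E(H)\}$ is a circuit of the torso $N_t$, and every end of $H$ (viewed as a subspace of the end-compactification of $T$) lies in $\Psi$. The goal is then to show that the circuits of $N$ are exactly the subset-minimal candidate circuits.

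The forward direction is largely formal. Given a circuit $o$ of $N$, for each edge $tt' \in E(T)$ the 2-separation induced by $tt'$ forces $o$ either to lie entirely on one side or to \emph{cross}, using both virtual elements in the 2-sum sense. Let $H(o)$ have vertex set $\{t : o \cap R_t \neq \emptyset\}$ and edge set the crossing edges; connectedness of $H(o)$ follows because a matroid circuit cannot be split by a 2-separation. The standard circuit decomposition for 2-sums shows the local set at each $t$ is a single circuit of $N_t$, using minimality of $o$. Ends of $H(o)$ are by construction ends of $T$ used by $o$, hence lie in $\Psi$.

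The backward direction carries the real content. Given a minimal candidate circuit $S$ with associated subgraph $H$, I would show (a) that $S$ is dependent in $N$, and (b) that every proper nonempty subset of $S$ is independent. Part (b) proceeds by fixing $x \in S$ and using the local torso pictures to exhibit an independent extension of $S \setminus \{x\}$ in each $N_t$, then gluing these along $T$. Part (a) is routine for finite $H$ (iterated 2-sums plus strong circuit elimination). When $H$ is infinite, every end of $H$ lies in $\Psi$, so by definition there is, for each such end $\omega$, a circuit $o_\omega$ of $N$ using $\omega$; I would then construct a circuit of $N$ contained in $S$ by a transfinite strong-circuit-elimination argument, grafting the local torso circuits prescribed by $S$ onto the witnesses $o_\omega$. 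Minimality of $S$ forces the output to equal $S$.

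The principal obstacle is part (a) for infinite $H$. The definition of $\Psi$ is weak: it only says that \emph{some} circuit uses each end, not one matching the pattern of $S$ along $T$. Tameness enters precisely here. Since every circuit meets every cocircuit finitely, one can control the local behaviour of each witness $o_\omega$ near each torso and deform $o_\omega$ into conformity with $S$'s pattern along $H$; without this control the elimination process could fail to converge or could produce a circuit strictly larger than $S$. I expect the rigorous argument to take the form of a compactness or direct-limit argument over finite subtrees exhausting $H$, with tameness supplying the key compactness ingredient that ensures the direct limit is in fact a circuit of $N$ rather than merely a dependent set.
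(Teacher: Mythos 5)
Your proposed characterization of circuits as ``minimal candidate circuits'' is indeed the right one: these are exactly what the paper calls $\Psi$-circuits of the tree of matroids $\Tcal(N,T,R)$. And the observation that the forward direction is routine (using the 2-sum decomposition and minimality) is correct. However, the route you propose to the backward direction — showing that every minimal candidate circuit actually is an $N$-circuit — has a genuine gap, and it is precisely where the paper does its heaviest lifting.

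The paper does not prove \autoref{tamemain} directly. Instead it proves the much stronger \autoref{recon:arbi}: for \emph{any} matroid $N$ with an adhesion-$2$ tree-decomposition, $N$ equals the $\Phi$-matroid for $\Tcal(N,T,R)$, where $\Phi$ records, at each end, the full set of $\simeq$-equivalence classes of prolonged circuits used by $N$ along that end. Tameness then enters only at the very last step: by the observation at the end of \autoref{repQ}, for a tame matroid any prolonged circuit in $\bigcup\Phi$ and prolonged cocircuit in $\bigcup\Phi^*$ at a common end would have infinite intersection, so at each end either $\Phi(Q_\omega)$ or $\Phi(Q_\omega)^*$ is empty. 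Hence $\Phi$ collapses to ``full or empty'' at each end, which is exactly the information contained in $\Psi$. You have, in effect, anticipated this collapse (your remark that tameness should let you ``deform $o_\omega$ into conformity with $S$'s pattern'' is gesturing at the same phenomenon), but you have not proved it, and your plan does not separate the two ingredients.

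The concrete gap is in your part (a) for infinite $H$. You suggest a ``transfinite strong-circuit-elimination'' or ``direct-limit/compactness'' argument, grafting local torso circuits onto witnesses $o_\omega$. There are two problems. First, circuit elimination in infinite matroids does not in general converge in the way you need: iterating (C3) along an infinite ray need not stabilise to a circuit contained in $S$, and tameness alone is not known to supply the required compactness. Second, your witnesses $o_\omega$ only certify that $\omega \in \Psi$; they say nothing about the pattern of local circuits $S$ prescribes along the ray to $\omega$, and so the ``grafting'' step is exactly the content of the ray-case analysis (\autoref{ray_case}) and of the $\simeq$-equivalence closure results (\autoref{finfix}, \autoref{cir_closed}), none of which you have reproduced. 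The paper's actual mechanism for (a) is dual to your plan: rather than constructing a circuit inside $S$, it shows via \autoref{is_scrawl} that $S$ never meets a cocircuit of $N$ exactly once, hence $S$ is a scrawl. The proof of that fact (\autoref{precir_give_scrawl}) proceeds by contradiction: a cocircuit meeting $S$ in exactly one element is refined in two stages ($b \to b_1 \to b_2$) using the minimisation lemma (O3$^*$), until its supporting tree has all vertices of degree two except one — contradicting the fact (\autoref{binary_tree}) that such a supporting tree must contain a subdivided binary tree. Nothing in your sketch replaces this argument, and the direct constructive approach you describe is not known to work.

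In short: your circuit characterization is the right target, and your identification of where tameness must enter is insightful, but the backward direction needs the scrawl/coscrawl duality and the binary-tree contradiction, and you should factor the proof into the general $\Phi$-reconstruction theorem plus the tameness collapse, as the paper does, rather than attempting to prove the tame case in one step by direct circuit construction.
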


Note that this theorem applies to arbitrary adhesion 2 tree-decompositions of $N$, not just to the canonical one mentioned above. It applies only to tame matroids, but this is known to be a rich and very natural class of matroids, as explained in \cite{BC:wildmatroids, BC:psi_matroids}.

The process by which the matroid $N$ can be rebuilt is a generalisation of 2-sums to infinite trees of matroids developed in \cite{BC:psi_matroids}. This construction also provides a tool for building new matroids. In \cite{BC:psi_matroids} we characterised, for a given tree of matroids as above, for which sets $\Psi$ of ends of the tree we get a matroid by this infinitary 2-sum operation: in particular, this is true whenever $\Psi$ is Borel. 

What if the matroid to be reconstructed is not tame? In this case, the information given by the tree of torsos and the set of ends which are used is woefully inadequate. Consider once more the graph $Q$ from \autoref{fig:quircuits2}. We say that two edge sets of double rays of $Q$ are {\em equivalent} if they have finite symmetric difference (this means that the double rays have the same tails). Let $\Ccal$ be the union of any collection of equivalence classes with respect to this relation, together with the class of all edge sets of finite cycles in $Q$. We will show in \autoref{sec:ray_case} that any such $\Ccal$ is the set of circuits of a matroid. Furthermore, all these matroids have the same canonical tree-decomposition and the same torsos as $M_{FC}(Q)$ and $M_{TC}(Q)$. Accordingly, we need a great deal of new information to reconstruct the matroid.

Even worse, we may need such information to be provided for each of uncountably many ends. In \autoref{example_binary_tree}, we will construct a graph $W$ (pictured in \autoref{fig:Wint}) by sticking together many copies of $Q$ along the rays of a binary tree. We will do this in such a way that any topological circuit of $W$ which uses an end $\omega$ looks, in the vicinity of $\omega$, like a double ray in the copy of $Q$ running to $\omega$. If we choose a set of equivalence classes as above for each of these copies of $Q$ then we get a matroid whose circuits are the circuits of the topological cycle matroid of $W$ which match one of the chosen equivalence classes at each end they use. All of these matroids have the same tree-decomposition into the same torsos. This example is discussed in more detail in \autoref{example_binary_tree}. Nevertheless, all the extra information used here is local to the ends, in the sense that the choices made at the end $\omega$ only affect what circuits can do close to $\omega$.
\begin{figure}[htb]
\begin{center}
 \includegraphics[width=8cm]{./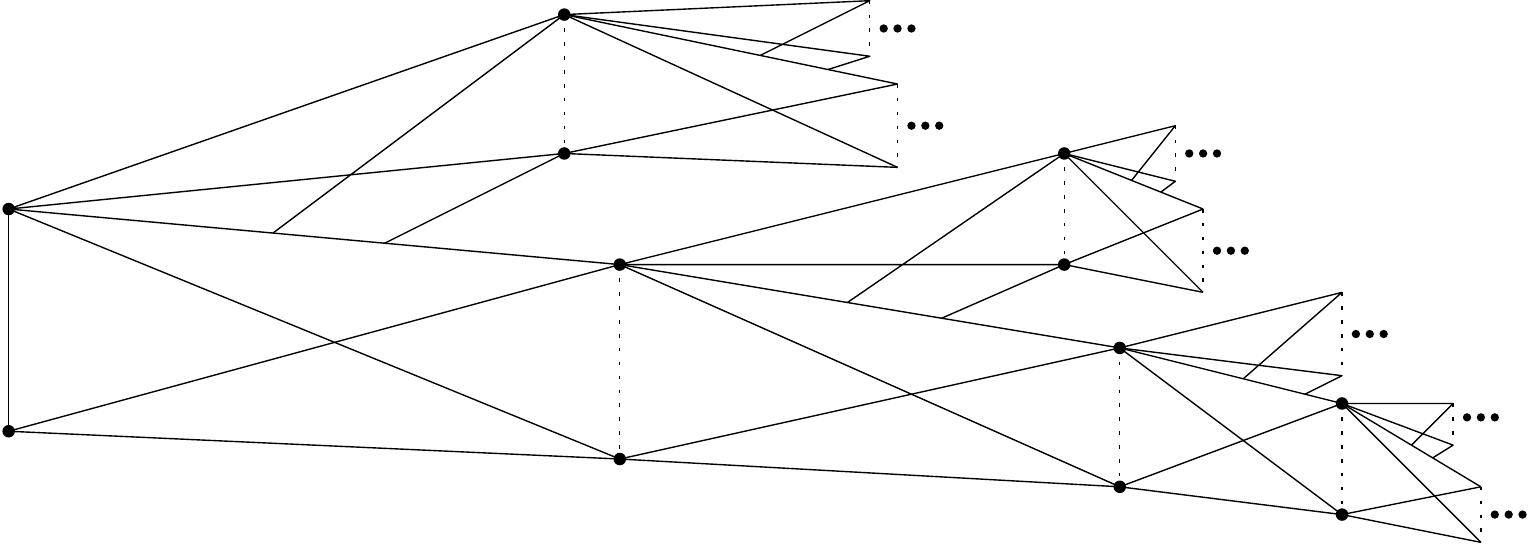}
\end{center} 
\caption{The graph $W$.} \label{fig:Wint}
\end{figure}

We show that this is true in general.

\begin{thm}\label{main:thm_intro}
Let $R = (R_t)_{t \in V(T)}$ be a tree-decomposition of adhesion 2 of a (not necessarily tame) matroid $N$. Then $N$ is uniquely determined by the tree $T$, the torsos $(N_t | t \in T)$ and local information at the ends of $T$.
\end{thm}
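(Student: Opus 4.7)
The plan is to generalize the proof of \autoref{tamemain} by refining what ``local information'' means at each end. In the tame case that information is a single bit at each end; in general we need finer combinatorial data encoding all possible asymptotic shapes of circuits at the end.

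First I would define the local information. For an end $\omega$ of $T$ fix a ray $\rho = t_0 t_1 t_2 \ldots$ converging to $\omega$ and, for $C \se E(N)$, define the \emph{tail of $C$ at $\omega$ along $\rho$} to be the sequence $(C \cap R_{t_i})_{i \ge 0}$ considered modulo equality in cofinitely many coordinates. Let $\Psi_\omega$ be the set of equivalence classes of tails of circuits of $N$ that use $\omega$. One should verify that $\Psi_\omega$ is independent of $\rho$: any two rays to $\omega$ share a tail in $T$, so they see the same virtual elements and hence the same induced 2-separations of $N$.

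Given the data $(T, (N_t)_{t \in V(T)}, (\Psi_\omega)_\omega)$ the plan is to reconstruct $N$ by defining a candidate circuit family. Call $C \se E(N)$ \emph{admissible} if (i) for every finite subtree $T' \se T$ the restriction of $C$ is consistent with a disjoint union of circuits of the 2-sum of the torsos $(N_t)_{t \in V(T')}$, and (ii) for every end $\omega$ in the closure of $\{t : C \cap R_t \neq \emptyset\}$ the tail of $C$ at $\omega$ lies in $\Psi_\omega$. Let $\Ccal$ be the family of inclusion-minimal admissible sets. The direction that every circuit of $N$ lies in $\Ccal$ is immediate from the 2-separation structure and the definition of $\Psi_\omega$. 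For the converse I would invoke a generalized version of the infinitary 2-sum construction of~\cite{BC:psi_matroids}, in which permitted behavior at the ends is prescribed per end rather than by a single global set, and argue that the resulting circuit system coincides with the circuits of $N$.

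The principal obstacle is the gluing step in the converse direction: given a minimal admissible $C$ whose support closes over a (possibly uncountable) set of ends $\Omega_0$, one must produce an actual circuit of $N$ with underlying set exactly $C$. This requires simultaneously realizing the prescribed tails at every end in $\Omega_0$ compatibly with the finite trace of $C$. I would attack this by transfinite patching along the 2-separations induced by edges of $T$, exploiting that patchings at disjoint portions of $T$ do not interfere, and combining them with a compactness/uniqueness lemma asserting that a subset of $E(N)$ is determined by its restrictions to all finite subtrees together with its tails at every end. Ensuring that the patched set is in fact a \emph{single} circuit of $N$ and not merely a disjoint union is the delicate point, and is precisely the reason $\Ccal$ is defined by inclusion-minimality rather than directly.
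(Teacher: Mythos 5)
Your setup is close in spirit to the paper's, but the proposal has a genuine gap precisely at the point you flag as "the principal obstacle," and the patching/compactness strategy you sketch would not close it.

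The paper's reconstruction (Theorem~\ref{recon:arbi}) hinges on a single hard lemma (Lemma~\ref{precir_give_scrawl}): the underlying set of any $\Phi$-precircuit is a \emph{scrawl} of $N$, i.e.\ a union of $N$-circuits, and dually for $\Phi^*$-precocircuits. Once this is known, both inclusions between $\Phi$-circuits and $N$-circuits fall out by a short minimality argument. Proving that a set is a scrawl means, by Lemma~\ref{is_scrawl}, showing it never meets any cocircuit of $N$ in exactly one element. This is a positive dependence statement that cannot be obtained by patching local data together with a compactness/uniqueness lemma: compactness can tell you what a candidate set looks like on finite subtrees and at ends, but it cannot certify that the set contains a circuit of $N$ rather than being independent. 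Your own example section illustrates why: the wild $\Qcal$-matroids all look identical on finite subtrees, and the distinguishing feature is exactly which prolonged sets are dependent.

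The paper proves Lemma~\ref{precir_give_scrawl} by contradiction: if a $\Phi$-precircuit underlying set $o$ meets an $N$-cocircuit $b$ exactly in $\{e\}$, then (Lemma~\ref{binary_tree}) the supporting tree $S_b$ of the canonical precocircuit of $b$ must contain a subdivision of the infinite binary tree. A careful two-step minimization of $b$ using the relative-minimality property $(O3)^*$ — first against a set $B$ of branching vertices satisfying conditions 1--4 of Lemma~\ref{there_is_B}, then against its complement $B'$ in $S_{b_1}$ — produces a cocircuit $b_2$ with the same offending intersection but whose supporting tree has all vertices of degree~2 except $t_e$, hence cannot contain a binary-tree subdivision, a contradiction. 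This cocircuit-shrinking argument is the engine of the proof and has no analogue in your sketch.

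Two lesser points. First, your $\Psi_\omega$ records tails up to cofinite agreement, which is a finer relation than the paper's $\simeq$; the set of tails of $N$-circuits happens to be a union of $\simeq$-classes (Lemmas~\ref{finfix} and~\ref{cir_closed}), so this is only an encoding difference, but you would need the coarser relation to state the converse cleanly. Second, your condition (i) asks only for consistency with a \emph{disjoint union} of circuits on each finite subtree, whereas the paper's precircuits assign a single circuit per node; taking inclusion-minimal admissible sets is intended to repair this, but without the scrawl lemma you have no guarantee that minimal admissible sets are circuits of $N$ at all.
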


The precise form of this local information is explained in \autoref{sec:main_result}. From this more general result we will deduce \autoref{tamemain}, which says that  for tame matroids the only local information we need is which ends are used.

There is another case in which the local information collapses to simply the information about which ends are used. We say that a tree-decomposition of a matroid is {\em planar} if each of the torsos can be represented as the finite cycle matroid of some finite graph embedded in the plane, in such a way that all of the virtual elements lie on the boundary of the outer face. 

\begin{cor}\label{cor:planar}
Let $R = (R_t)_{t \in V(T)}$ be a planar tree-decomposition of adhesion 2 of a matroid $N$. Then $N$ is uniquely determined by the tree $T$, the torsos $(N_t | t \in T)$ and the set of ends of $T$ used by circuits of $N$. 
\end{cor}

The wild matroids constructed from the graph $Q$ are of interest in their own right as counterexamples. In particular, there is a natural question if the characterisations of finite binary matroids are equivalent in the infinite setting. For the wild matroids arising from $Q$, these equivalences break dow; these matroids satisfy roughly half of the basic characterisations of finite binary matroids (see \cite{Oxley} for many such characterisations). 
For example, none of these matroids has a $U_{2,4}$-minor, and all of them have the property that any symmetric difference of two circuits is a disjoint union of circuits. But in one of them there is a set of three circuits whose symmetric difference is not a disjoint union of circuits. See \autoref{repQ} for details.

The paper is organised as follows: In \autoref{sec:prelims} we recall some basic facts and 
give a new simple proof that the torso of a canonical decomposition is a matroid. 
Before proving our main result in \autoref{sec:main_result}, we first deal with the subcase where the torsos are glued together along a ray, see \autoref{sec:ray_case}.
Finally, in \autoref{example_binary_tree} we explain the example indicated in the Introduction before the statement of \autoref{main:thm_intro}.

\section{Preliminaries}\label{sec:prelims}
\subsection{Infinite matroids}

We begin by recalling two of the axiomatisations of infinite matroids given in \cite{matroid_axioms}.

A set system $\Ical\subseteq \Pcal(E)$ is the set of independent sets of a matroid iff it satisfies the following {\em independence  axioms\/}.
\begin{itemize}
	\item[(I1)] $\varnothing\in \Ical(M)$.
	\item[(I2)] $\Ical(M)$ is closed under taking subsets.
	\item[(I3)] Whenever $I,I'\in \Ical(M)$ with $I'$ maximal and $I$ not maximal, there exists an $x\in I'\setminus I$ such that $I+x\in \Ical(M)$.
	\item[(IM)] Whenever $I\subseteq X\subseteq E$ and $I\in\Ical(M)$, the set $\{I'\in\Ical(M)\mid I\subseteq I'\subseteq X\}$ has a maximal element.
\end{itemize}

The {\em bases} of such a matroid are the maximal independent sets, and the {\em circuits} are the minimal dependent sets.

A set system $\Ccal\subseteq \Pcal(E)$ is the set of circuits of a matroid iff it satisfies the following {\em circuit  axioms\/}.
\begin{itemize}
\item[(C1)] $\varnothing\notin\Ccal$.
\item[(C2)] No element of $\Ccal$ is a subset of another.
\item[ (C3)](Circuit elimination) Whenever $X\subseteq o\in \Ccal(M)$ and $\{o_x\mid x\in X\} \subseteq \Ccal(M)$ satisfies $x\in o_y\Leftrightarrow x=y$ for all $x,y\in X$, 
then for every $z \in o\setminus \left( \bigcup_{x \in X} o_x\right)$ there exists a  $o'\in \Ccal(M)$ such that $z\in o'\subseteq \left(o\cup  \bigcup_{x \in X} o_x\right) \setminus X$.

\item[(CM)] $\Ical$ satisfies (IM), where $\Ical$ is the set of those subsets of $E$ not including an element of $\Ccal$.
\end{itemize}

The following basic lemmas may be proved as for finite matroids, see for example \cite{BC:psi_matroids}.

\begin{lem}\label{fdt}
 Let $M$ be a matroid and $s$ be a base.
Let $o_e$ be a fundamental circuit with respect to $s$ and $b_f$ be a fundamental cocircuit with respect to $s$. Then:
\begin{enumerate}
 \item $o_e\cap b_f$ is empty or $o_e\cap b_f=\{e,f\}$ and
\item $f\in o_e$ if and only if $e\in b_f$.
\end{enumerate}
\end{lem}

\begin{lem}\label{o_cap_b}
 For any circuit $o$ with at least two elements $e$ and $f$, there is a cocircuit $b$ such that $o\cap b=\{e,f\}$.
\end{lem}

\begin{lem} \label{rest_cir}
 Let $M$ be a matroid with ground set $E = C \dot \cup X \dot \cup D$ and let $o'$ be a circuit of $M' = M / C \backslash D$.
Then there is an $M$-circuit $o$ with $o' \subseteq o \subseteq o' \cup C$.
\end{lem}

A \emph{scrawl} is a union of circuits. In \cite{BC:rep_matroids},
(infinite) matroids are axiomatised in terms of scrawls. 
The set $\Scal(M)$ denotes the set of scrawls of the matroid $M$.
Dually a \emph{coscrawl} is a union of cocircuits.
Since no circuit and cocircuit can meet in only one element,
no scrawl and coscrawl can meet in only one element. 
In fact, this property gives us a simple characterisation of scrawls in terms of coscrawls and vice versa.

\begin{lem}\label{is_scrawl}
Let $M$ be a matroid, and let $w\subseteq E$. The following are equivalent:
\begin{enumerate}
 \item $w$ is a scrawl of $M$.
 \item $w$ never meets a cocircuit of $M$ just once.
 \item $w$ never meets a coscrawl of $M$ just once.
\end{enumerate}
\end{lem}

The dual of the statement proved in the following Lemma appears as axiom (O3$^*$) in an axiomatisation discussed in \cite{BC:psi_matroids}. We will need that this statement holds in any matroid.

\begin{lem}
 Let $M$ be a matroid, and let $o$ be a circuit of $M$, $X$ a subset of the ground set $E$ of $M$ and $e \in o \setminus X$. Then there is a circuit $o_{min}$ of $M$ with $e \in o_{min} \subseteq X \cup o$ and such that $o_{min} \sm X$ is minimised subject to these conditions.
\end{lem}
\begin{proof}
$o \setminus X$ never meets a cocircuit $b$ of $M / X$ just once: if it did, then the circuit $o$ of $M$ would meet the cocircuit $b$ of $M$ just once. So by \autoref{is_scrawl}, $o \setminus X$ is a union of circuits of $M/X$. Let $o'$ be a circuit of $M/X$ with $e \in o' \subseteq o \setminus X$, and take $o_{min}$ to be a circuit of $M$ with $o' \subseteq o_{min} \subseteq o' \cup X$ (which exists by \autoref{rest_cir}). Then for any other circuit $\tilde o$ of $M$ with $e \in \tilde o \setminus X \subseteq o_{min} \setminus X$ we have that $\tilde o \setminus X$ is a nonempty union of circuits of $M/X$ included in the circuit $o'$, so is equal to $o'$, so that $o_{min} \setminus X \subseteq \tilde o \setminus X$. This establishes the minimality of $o_{min} \setminus X$.
\end{proof}

\begin{lem}\label{cir_c_scrawl}
 Let $M$ be a matroid and $\Ccal,\Dcal\se \Pcal(E)$
such that every $M$-circuit is a union of elements of $\Ccal$, every $M$-cocircuit is a union of elements of $\Dcal$ and $|C\cap D|\neq 1$ for every $C\in\Ccal$ and every $D\in\Dcal$.

Then $\Ccal(M)\se \Ccal\se \Scal(M)$ and  $\Ccal(M^*)\se \Dcal\se \Scal(M^*)$.
\end{lem}

\begin{proof}
We begin by showing that $\Ccal(M) \se \Ccal$. Let $o$ be a circuit of $M$ and $e$ be an element of $o$. Since $o$ is a union of elements of $\Ccal$ there is some $o' \in \Ccal$ with $e \in o' \se o$. Suppose for a contradiction that $o'$ is a proper subset of $o$, so that there is some $f \in o \sm o'$. By \autoref{o_cap_b} there is a cocircuit $b$ of $M$ with $o' \cap b = \{e\}$. But then there is some $b' \in \Dcal$ with $e \in b' \se b$ such that $o' \cap b' = \{e\}$, giving the desired contradiction. Dually we obtain that $\Ccal(M^*) \se \Dcal$.

The fact that $\Ccal \se \Scal(M)$ is immediate from \autoref{is_scrawl} since $\Ccal(M^*) \se \Dcal$, and the proof that $\Dcal \se \Scal(M^*)$ is similar.
\end{proof}
We will need a lemma showing that any circuit can be obtained from any other by a single application of (C3).

\begin{lem}\label{good_cir_eli}
Let $o$ and $o'$ be circuits of a matroid $M$, and let $z$ be in both $o$ and $o'$. Then there are a set $X \subseteq o - z$ and a family $(o_x | x \in X)$ of circuits of $M$ with $o_x \cap (X \cup \{z\}) = \{x\}$ for each $x \in X$ such that the only circuit $o''$ with $z \in o'' \subseteq o \cup (\bigcup_{x \in X} o_x) \setminus X$ is $o'$.
\end{lem}
\begin{proof}
Let $M' = M/(o' - z)$, and let $s$ be a base of $o \setminus o'$ in $M'$. We take $X = (o \setminus o') \setminus s$, and for each $x \in X$ we take $\hat o_x$ to be the fundamental circuit of $x$ with respect to $s$ and $o_x$ to be an $M$-circuit with $\hat o_x \subseteq o_x \subseteq \hat o_x \cup (o' - z)$, which exists by Lemma \ref{rest_cir}. Thus $o_x \cap (X \cup \{z\}) = \{x\}$ for each $x \in X$, and $z \in o' \subseteq o \cup (\bigcup_{x \in X} o_x) \setminus X$. Now suppose for a contradiction that there is some other circuit $o''$ such that $z \in o'' \subseteq o \cup (\bigcup_{x \in X} o_x) \setminus X$, and let $e \in o'' \setminus o'$. Then also $e \in o \setminus o' \setminus X = s$. Since $s$ is coindependent in $M'$, there must be some cocircuit $b$ of $M'$ with $b \cap s = \{e\}$. Since $M'$ is a contraction of $M$, the cocircuit $b$ is also a cocircuit of $M$, and so it cannot meet $o''$ only in $e$, and hence it must contain some point of $o'$. But the only point of $o'$ that $b$ can contain is $z$, so 
$b \cap o' = \{z\}$, which is the desired contradiction.
\end{proof}

\subsection{A hybrid axiomatisation}

In~\cite{BC:psi_matroids}, the following axioms were used as part of an axiomatisation of countable matroids:
\begin{itemize}
\item[(O1)] $|C\cap D|\neq 1$ for all $C\in \Ccal$ and $D\in \Dcal$. 
\item[(O2)] For all partitions $E=P\dot\cup Q\dot\cup \{e\}$
either $P+e$ includes an element of $\Ccal$ through $e$ or
$Q+e$ includes an element of $\Dcal$ through $e$.
\end{itemize}

In this paper, we will not restrict our attention to countable matroids, so we cannot use this axiomatisation. However, these two axioms will still be useful to us, because of the following lemma.

For a set $\Ccal\subseteq \Pcal(E)$, let $\Ccal^\perp$
be the set of those subsets of $E$ that meet no element of $\Ccal$ just once.
Note that $(O1)$ is equivalent to $\Dcal$ being a subset of $\Ccal^\perp$.

\begin{lem}[\cite{BC:psi_matroids}]\label{cireli_partitioning}
Let $\Ccal \subseteq \Pcal(E)$.
Then $\Ccal$ and $\Ccal^\perp$ satisfy $(O2)$ if and only if
$\Ccal$ satisfies circuit elimination $(C3)$. 
\end{lem}

This means that if we know what the cocircuits of a matroid ought to be then we can use (O1) and (O2) as a more symmetric substitute for the circuit elimination axiom (C3). More precisely:

\begin{thm}\label{hybrid}
Let $\Ccal$ and $\Dcal$ be sets of subsets of a set $E$. Then there is a matroid whose set of circuits is $\Ccal$ and whose set of cocircuits is $\Dcal$ if and only if all of the following conditions hold:
\begin{enumerate}
\item Each of $\Ccal$ and $\Dcal$ satisfies (C1) and (C2).
\item $\Ccal$ and $\Dcal$ satisfy (O1) and (O2).
\item $\Ccal$ satisfies (CM)
\end{enumerate}
\end{thm}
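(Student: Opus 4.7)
The plan is to handle the two directions separately, with all the substance lying in the ``if'' direction. For the ``only if'' direction, suppose $\Ccal$ and $\Dcal$ are the circuits and cocircuits of a matroid $M$. Then (C1), (C2) and (CM) are immediate from the circuit axiomatisation of $M$, and (O1) is the familiar orthogonality of circuits and cocircuits. To derive (O2), I would observe that (C3) for $\Ccal$ gives, via \autoref{cireli_partitioning}, that $\Ccal$ and $\Ccal^\perp$ satisfy (O2); since every element of $\Ccal^\perp$ is a coscrawl (by the dual of \autoref{is_scrawl}) and hence contains a cocircuit through any of its elements, the stronger pair $(\Ccal,\Dcal)$ inherits (O2).

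For the ``if'' direction, assume the three conditions. First I would reverse the previous argument to build the matroid. By (O1) we have $\Dcal \subseteq \Ccal^\perp$, so (O2) for $(\Ccal,\Dcal)$ immediately implies (O2) for $(\Ccal,\Ccal^\perp)$; then \autoref{cireli_partitioning} gives (C3) for $\Ccal$. Together with (C1), (C2) and (CM), the circuit axiomatisation yields a matroid $M$ with $\Ccal(M) = \Ccal$. It remains to identify $\Ccal(M^*)$ with $\Dcal$.

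The key step is to use the axioms to link elements of $\Dcal$ to $M$-cocircuits in both directions. Using (O1) and the dual of \autoref{is_scrawl}, each $D \in \Dcal$ is a coscrawl of $M$, i.e., a union of $M$-cocircuits. In the other direction, let $b$ be an $M$-cocircuit and $e \in b$; apply (O2) to the partition $P = E \setminus b$, $Q = b \setminus \{e\}$. If some $C \in \Ccal$ were contained in $P + e$ with $e \in C$, then $C \cap b = \{e\}$, violating (O1); so instead some $D \in \Dcal$ satisfies $e \in D \subseteq b$. Combining this with the coscrawl property, $D$ contains an $M$-cocircuit $b' \ni e$, and the chain $b' \subseteq D \subseteq b$ together with the minimality of cocircuits forces $b' = b$, hence $D = b \in \Dcal$. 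For the reverse inclusion, take $D \in \Dcal$; the coscrawl property supplies an $M$-cocircuit $b \subseteq D$, and since $b \in \Dcal$ by the previous paragraph, (C2) for $\Dcal$ forces $b = D$.

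The main obstacle I anticipate is precisely the bootstrap argument in the third paragraph: we must produce an $M$-cocircuit from $\Dcal$ without already knowing that $\Dcal$ contains all $M$-cocircuits. The interplay of (O1) (to rule out the circuit alternative in (O2)) with (O2) (to force a $\Dcal$-element to appear inside the given $M$-cocircuit) is what makes this possible, and it is where the symmetry between circuits and cocircuits enforced by the hypotheses really earns its keep.
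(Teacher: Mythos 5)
Your proof is correct and follows essentially the same route as the paper: deduce (C3) for $\Ccal$ from (O1), (O2), and \autoref{cireli_partitioning}, obtain a matroid $M$ with $\Ccal(M) = \Ccal$, and then use the coscrawl characterisation (\autoref{is_scrawl}) together with an application of (O2) to the partition induced by an $M$-cocircuit to pin down $\Dcal = \Ccal(M^*)$. One small slip: when you rule out the circuit alternative in (O2) for an $M$-cocircuit $b$, you write that $C \cap b = \{e\}$ would violate (O1), but (O1) only speaks of pairs from $\Ccal \times \Dcal$, and at that point you do not yet know $b \in \Dcal$; what you actually need (and what suffices) is the ordinary matroid orthogonality of the circuit $C$ of $M$ with the cocircuit $b$ of $M$, as the paper does.
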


\begin{proof}
It is clear that all of these conditions are satisfied by the sets of circuits and cocircuits of any matroid. So it suffices to prove the reverse implications. Suppose that all 3 conditions hold. Then $\Ccal$ satisfies (C1), (C2) and (CM) by assumption, and satisfies (C3) by Lemma \ref{cireli_partitioning} and the fact that $\Dcal \subseteq \Ccal^\perp$. So there is a matroid $M$ whose set of circuits is $\Ccal$. Then by (O1) and Lemma \ref{is_scrawl}, every element $b$ of $\Dcal$ is a coscrawl of $M$ and so, being nonempty by (C1), includes some cocircuit $b'$ of $M$. Let $e \in b'$. On the other hand, for any cocircuit $b'$ of $M$ there is no circuit of $M$ meeting $b'$ exactly once, so by (O2) applied to the partition $E = (E \setminus b') \dot \cup (b' - e) \dot \cup \{e\}$ there must be some $b'' \in \Dcal$ with $e \in b'' \subseteq b'$. Then by (C2) we have $b'' = b$ and hence $b = b'$, thus $b$ is a cocircuit of $\Dcal$. We have now shown that every cocircuit of $M$ includes an element of $\Dcal$, and it 
follows that it must be that element. So $\Dcal$ is the set of cocircuits of $M$, as required.
\end{proof}

\subsection{Trees of matroids}\label{treesofmatroids}

In this section we review the relationship between trees of matroids and tree-decompositions of matroids.

\begin{dfn}
A {\em tree $\Tcal$ of matroids} consists of a tree $T$, together with a function $M$ assigning to each node $t$ of $T$ a matroid $M(t)$ on ground set $E(t)$, such that for any two nodes $t$ and $t'$ of $T$, if $E(t) \cap E(t')$ is nonempty then $tt'$ is an edge of $T$.

For any edge $tt'$ of $T$ we set $E(tt') = E(t) \cap E(t')$. We also define the {\em ground set} of $\Tcal$ to be $E = E(\Tcal) = \left(\bigcup_{t \in V(T)} E(t)\right) \setminus \left(\bigcup_{tt' \in E(T)} E(tt')\right)$. 

We shall refer to the elements which appear in some $E(t)$ but not in $E$ as {\em virtual elements} of $M(t)$; thus the set of such virtual elements is $\bigcup_{tt' \in E(T)} E(tt')$.
\end{dfn}

The idea is that the virtual elements are to be used only to give information about how the matroids are to be pasted together, but they will not be present in the final pasted matroid, which will have ground set $E(\Tcal)$. 
%Since all the $M(t)$ are finite, the tree $T$ must be locally finite.

\begin{dfn}
A tree $\Tcal = (T, M)$ of matroids is {\em of overlap 1} if, for every edge $tt'$ of $T$, $|E(tt')| = 1$. In this case, we denote the unique element of $E(tt')$ by $e(tt')$.

Given a tree of matroids of overlap 1 as above, a {\em precircuit} $(C, o)$ of $\Tcal$ consists of a connected subtree $C$ of $T$ together with a function $o$ assigning to each vertex $t$ of $C$ a circuit of $M(t)$, such that for any vertex $t$ of $C$ and any vertex $t'$ adjacent to $t$ in $T$, $e(tt') \in o(t)$ if and only if $t' \in C$. Given a set $\Psi$ of ends of $T$, such a precircuit is called a {\em $\Psi$-precircuit} if all ends of $C$ are in $\Psi$. The set of $\Psi$-precircuits is denoted $\overline\Ccal(\Tcal, \Psi)$. 

Any $\Psi$-precircuit $(C, o)$ has an {\em underlying set} $\underline{(C, o)} = E \cap \bigcup_{t \in V(C)} o(t)$. Minimal nonempty subsets of $E$ arising in this way are called {\em $\Psi$-circuits} of $\Tcal$. The set of $\Psi$-circuits of $\Tcal$ is denoted $\Ccal(\Tcal, \Psi)$.
\end{dfn}

\begin{dfn}
Let $\Tcal = (T, M)$ be a tree of matroids. Then the {\em dual} $\Tcal^*$ of $\Tcal$ is given by $(T, M^*)$, where $M^*$ is the function sending $t$ to $(M(t))^*$. For a subset $P$ of the ground set, the tree of matroids $\Tcal/P$ obtained from  $\Tcal$ by {\em contracting} $P$ is given by $(T, M/P)$, where $M/P$ is the function sending $t$ to $M(t)/(P \cap E(t))$. For a subset $Q$ of the ground set, the tree of matroids $\Tcal\backslash Q$ obtained from  $\Tcal$ by {\em deleting} $Q$ is given by $(T, M \backslash Q)$, where $M \backslash Q$ is the function sending $t$ to $M(t) \backslash (Q \cap E(t))$. 
\end{dfn}

The following lemma describes some basic properties of trees of matroids under duality, contraction and deletion. The proof is trivial from the definitions and hence is omitted.

\begin{lem}
For any tree $\Tcal$ of matroids, $\Tcal = \Tcal^{**}$. For any disjoint subsets $P$ and $Q$ of the ground set of $\Tcal$ we have $(\Tcal / P)^* = \Tcal^* \backslash P$, $(\Tcal \backslash Q)^* = \Tcal^* / Q$ and $\Tcal / P \backslash Q = \Tcal \backslash Q /P$. If $\Tcal$ has overlap 1 and $(\Tcal, \Psi)$ induces a matroid $M$, then $(\Tcal/P\backslash Q, \Psi)$ induces the matroid $M /P \backslash Q$ and $(\Tcal^*, \Psi\ct)$ induces the matroid $M^*$. \qed
\end{lem}

We will sometimes use the expression {\em $\Psi\ct$-cocircuits of $\Tcal$} for the $\Psi\ct$-circuits of $\Tcal^*$. If there is a matroid whose circuits are the  $\Psi$-circuits of $\Tcal$ and whose cocircuits are the $\Psi\ct$-cocircuits of $\Tcal$ then we will call that matroid the {\em $\Psi$-matroid} for $\Tcal$, and denote it $M_{\Psi}(\Tcal)$.

Note that if $\Tcal$ is finite then we always get a matroid in this way, which is simply the 2-sum of the matroids $M(t)$ in the sense of \cite{Oxley}.

\begin{lem}[Lemma 5.5,~\cite{BC:psi_matroids}]\label{2SumsO1}
Let $\Tcal = (T, M)$ be a tree of matroids, $\Psi$ a set of ends of $T$, and let $(C, o)$ and $(D, b)$ be respectively a $\Psi$-precircuit of $\Tcal$ and a $\Psi\ct$-precircuit of $\Tcal^*$. Then $|\underline{(C, o)} \cap \underline{(D, b)}| \neq 1$.
\end{lem}

\begin{dfn}
 If $T$ is a tree, and $tu$ is a (directed) edge of $T$, we take $T_{t \to u}$ to be the connected component of $T - t$ that contains $u$. If $\Tcal = (T, M)$ is a tree of matroids, we take $\Tcal_{t \to u}$ to be the tree of matroids $(T_{t \to u}, M \restric_{T_{t \to u}})$.
\end{dfn} 

So far we have discussed how to construct matroids by gluing together trees of `smaller' matroids. Now we turn to a notion, taken from~\cite{ADP:decomposition}, of a decomposition of a matroid into a tree of such smaller parts.

\begin{dfn}
A {\em tree-decomposition of adhesion 2} of a matroid $N$ consists of a tree $T$ and a partition $R = (R(v))_{v \in V(T)}$ of the ground set $E$ of $N$ such that for any edge $tt'$ of $T$ the partition $(\bigcup_{v \in V(T_{t \to t'})} R(v), \bigcup_{v \in V(T_{t' \to t})} R(v))$ is a 2-separation of $N$.

Given such a tree-decomposition, and a vertex $v$ of $T$, we define a matroid $M(v)$, called the {\em torso} of $T$ at $v$, as follows: the ground set of $M(v)$ consists of $R(v)$ together with a new element $e(vv')$ for each edge $vv'$ of $T$ incident with $v$. For any circuit $o$ of $N$ not included in any set $\bigcup_{t \in V(T_{v \to v'})} R(v)$, we have a circuit $\hat o(v)$ of $M(v)$ given by $(o \cap R(v) )\cup \{e(vv') \in E(v) | o \cap \bigcup_{t \in V(T_{v \to v'})} R(v)\neq \emptyset\}$. These are all the circuits of $M(v)$.

In this way we get a tree of matroids $\Tcal(N, T, R) = (T, v \mapsto M(v))$ of overlap 1 from any tree-decomposition of adhesion 2. For any circuit $o$ of $N$ we get a corresponding precircuit $(S_o, \hat o)$, where $S_o$ is just the subtree of $T$ consisting of those vertices $v$ for which $\hat o(v)$ is defined. We shall refer to this precircuit as the {\em canonical precircuit} of $o$.
\end{dfn}

Note that $\underline{(S_o, \hat o)} = o$. It is shown in~\cite[\S4, \S8]{ADP:decomposition} that each $M(v)$ really is a matroid, and we will show in \autoref{torso_minor} that it is isomorphic to a minor of $N$, and that $\Tcal(N^*, T, R) = (\Tcal(N, T, R))^*$.
\cite{ADP:decomposition} also contains the following theorem.

\begin{thm}[Aigner-Horev, Diestel, Postle]\label{deco}
 For any matroid $N$ there is a tree-decomposition $\Dcal(N)$ of adhesion 2 of $N$ such that all torsos have size at least 3 and are either circuits, cocircuits or 3-connected, and in which no two circuits and no two cocircuits are adjacent in the tree. This decomposition is unique in the sense that any other tree-decomposition with these properties must be isomorphic to it.
\end{thm}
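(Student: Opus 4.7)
The plan is to construct a maximal nested family of 2-separations of $N$ and show that it encodes the desired tree-decomposition. First I would say that two 2-separations $\{A_1,A_2\}$ and $\{B_1,B_2\}$ of $N$ are \emph{nested} if some corner $A_i \cap B_j$ is contained in the adhesion between them, and verify the standard fact that a nested family of 2-separations of $N$ induces a tree $T$ and partition $R = (R_v)_{v \in V(T)}$ of the ground set, where $R_v$ is the intersection of one side chosen from each 2-separation in the family.

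Next I would prove the usual uncrossing lemma using submodularity of the connectivity function: if two 2-separations of $N$ cross, then from the four corners $A_i \cap B_j$ one extracts two nested 2-separations witnessing the same combinatorial splits. By Zorn's lemma (applied within the set of 2-separations of $N$, of size at most $2^{|E|}$), this yields a maximal nested family $\Fcal$. The heart of the argument is to verify the torso property at each bag $v$. Using that each torso $M(v)$ is isomorphic to a minor of $N$ (this is to be proved separately in \autoref{torso_minor}), any non-trivial 2-separation of $M(v)$ lifts to a 2-separation of $N$ that is nested with every member of $\Fcal$ but not already in $\Fcal$, contradicting maximality. A torso $M(v)$ with $|E(v)|\ge 3$ and no non-trivial 2-separation must be 3-connected; the remaining possibility (every pair of elements in series or parallel) forces $M(v)$ to be a circuit or a cocircuit. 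To obtain the canonical form ``no two circuits adjacent, no two cocircuits adjacent'', I would contract the edge of $T$ between any two adjacent circuit torsos (and dually for cocircuits), using that the ``2-sum'' of two circuits sharing a single virtual element is again a circuit.

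For uniqueness, I would show that any tree-decomposition satisfying the conclusion induces a nested family of 2-separations of $N$, and that the ``essential'' 2-separations (those not arising from splitting inside a single circuit or cocircuit torso) form a canonical subfamily determined purely by which bipartitions of $E$ are 2-separations of $N$; two such maximal canonical families must then coincide, forcing an isomorphism of the decompositions. The main obstacle, and the reason the argument is substantially harder than in the finite case, is that the maximal nested family produced by Zorn's lemma must genuinely be tree-like, rather than, say, densely ordered in a way admitting no underlying tree. I would handle this by exploiting (IM) to pass to well-defined limit bags at the ends of $T$, and by invoking \autoref{cireli_partitioning} together with \autoref{hybrid} to check that the combinatorial object produced really satisfies the matroid axioms. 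Controlling the interaction between submodular uncrossing and infinite coranks inside a single side of a 2-separation is where I would expect the technical bulk of the proof to lie.
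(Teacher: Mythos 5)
The paper does not prove this theorem; immediately before the statement it writes ``\cite{ADP:decomposition} also contains the following theorem,'' so \autoref{deco} is quoted from Aigner-Horev, Diestel and Postle rather than re-derived. There is therefore no in-paper proof to compare against, and the appropriate question is whether your sketch is a plausible route to the cited result.

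As written it has two serious gaps. First, a maximal nested family of 2-separations obtained by Zorn's lemma does not, in general, induce a tree: you acknowledge the danger of a densely ordered family, but the remedy you propose --- ``exploiting (IM) to pass to well-defined limit bags'' and then invoking \autoref{cireli_partitioning} and \autoref{hybrid} --- does not address it. Those lemmas concern circuit elimination and the circuit/cocircuit axioms; they say nothing about whether a nested set of bipartitions of the ground set has a tree structure. In the finite case a maximal nested family is automatically laminar and hence tree-like, but this is exactly the part of the argument that breaks down for infinite $E$, and your sketch leaves it unresolved. Second, Zorn's lemma produces \emph{some} maximal nested family, of which there are typically many; nothing forces this choice to be canonical, so the uniqueness clause does not follow from your construction. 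The Cunningham--Edmonds-style decomposition that \cite{ADP:decomposition} generalises is built from a \emph{distinguished} family of 2-separations (those that are nested with every other 2-separation, after suitable preprocessing), not an arbitrary maximal family, and identifying and controlling that canonical family in the infinite setting is the substance of their paper. There is also a smaller circularity worry in appealing to \autoref{torso_minor}, which in this paper is itself proved using the notion of tree-decomposition of adhesion~2; for a from-scratch proof you would need the single-2-separation version (\autoref{gluing2}) instead. Finally, the step from ``no nontrivial 2-separations'' to ``circuit, cocircuit or 3-connected'' tacitly assumes 2-connectedness of the torsos, which needs to be secured (typically by first reducing to $N$ connected and tracking connectivity of torsos), and torsos of size 1 or 2 must be ruled out or merged.
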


The above theorem is a generalisation to infinite matroids of a standard result about finite matroids~\cite{findec1, findec2}. If $N$ is a finite matroid, it is possible to reconstruct $N$ from the decomposition $\Dcal(N)$. However, as noted in the introduction, it is not in general possible to reconstruct $N$ from $\Dcal(N)$ if $N$ is infinite.

\subsection{Tree-decompositions and $2$-separations in infinite matroids}

Our aim in this subsection is to show that if $N$ is a matroid with a tree-decomposition of adhesion 2 over some tree $T$ and $S$ is a subtree of $T$ then the torso of $N$ when we cut off the parts of $N$ not corresponding to $S$ is a minor of $N$. In particular, all the torsos mentioned in \autoref{deco} are minors of $N$, a statement which is claimed but not yet proved there. We begin by recalling that the familiar fact that 2-separations can be encoded by 2-sums extends straightforwardly to infinite matroids.

\begin{comment}
\begin{dfn}
 Let $N$ be a matroid with a tree-decomposition $(T,R)$ of adhesion 2,
and let $o$ be an $N$-circuit.
The \emph{canonical precircuit of $o$} is the pair $(S_o,\hat o)$ 
where $S_o$ is the smallest subtree of $T$ connecting all those nodes $t$
for which $o\cap R(t)$ is nonempty, and $\hat o$ is a function assigning each $t\in S_o$
an $M(t)$-circuit obtained from $o\cap R(t)$ by adding all virtual elements $e(tt')$ for $tt'\in S_o$.
\end{dfn}
\end{comment}

\begin{lem}[\cite{ADP:decomposition}, Lemma 3.6]\label{gluing2}
Let $(A,B)$ be a $2$-separation in a matroid $N$.
Then there are two matroids $N(A)$ and $N(B)$ with 
ground sets $A+e(A,B)$ and $B+e(A,B)$ where $e(A,B)$ is not in $E(N)$,
so that $N=N(A)\oplus_2 N(B)$. 
\end{lem}

\begin{comment}
 
We do not need the following Lemma

\begin{lem}\label{gluing}[Lemma 3.8 in TODO cite matroid decomposition]
If $o_1$ and $o_2$ are circuits of $N$ having elements on both sides of a $2$-separation 
$(A,B)$ of $N$, then $(o_1\cap A)\cup (o_2\cap B)$ is a circuit of $N$. 
\end{lem}

TODO define matroid $N(A)$ and $N(B)$ with virtual element $e(A,B)$.

Moreover, $N(A)$ and $N(B)$ are both isomorphic to a minor of $N$.

Let $(A,B)$ be a $2$-separation in a matroid $N$.
Let $o_A$ be a circuit of $N(A)$ using the element $e(A,B)$
and $o_B$ be a circuit of $N(B)$ using the element $e(A,B)$.
Then $o_A\Delta o_B=o_A\cup o_B-e(A,B)$ is an $N$-circuit.
Let $N$ be a matroid with a tree-decomposition of adhesion 2. 
\end{lem}

\end{comment}

\begin{dfn}
Let $(T, R)$ be a tree-decomposition of a matroid $N$ of adhesion 2, and let $S$ be a subtree of $T$. The {\em star-decomposition} $(T_S, R_S)$ of $N$ corresponding to $S$ is given by taking $T_S$ to be the star with central node $S$ and with a leaf $l(tt')$ for each edge $tt'$ of $T$ with $t \in S$ but $t' \not \in S$, and taking $R_S(S) = \bigcup_{v \in S}R(v)$ and $R_S(l(tt')) = \bigcup_{v \in T_{t \to t'}} R(v)$. The star-decomposition also has adhesion 2.

We define $N_S$ to be the torso of $T_S$ at $S$. For notational convenience, we will identify the virtual element $e(Sl(tt'))$ of this torso with the virtual element $e(tt')$ arising in the construction of $\Tcal(N, T, R)$. There is a natural tree-decomposition $(S, R \restric_S)$ of $N_S$, where we take $R \restric_S(t)$ to consist of $R(t)$ together with all the virtual elements $e(tt')$ with $t'$ a neighbour of $t$ in $T$ such that $t' \not \in S$.
\end{dfn}

By~\cite{ADP:decomposition}, Lemma 4.9, the tree-decomposition $(S, R \restric_S)$ has adhesion 2. It is clear that $\Tcal(N_S, S, R \restric_S)$ is just the restriction of $\Tcal(N, T, R)$ to $S$, and that the $N_S$-circuits are precisely the 
nonempty underlying sets of canonical precircuits of $N$-circuits restricted to $S$.
(The dual statement is true for the $N'$-cocircuits.)

We say that two matroids are \emph{realistically isomorphic} if 
there is an isomorphism between them that is the identity when restricted to the intersection of their ground sets.

We are finally in a position to prove the main result of this subsection.

\begin{thm}\label{X1}
Let $N$ be a matroid with a tree-decomposition $(T,R)$ of adhesion 2.
Let $S$ be a subtree of $T$.
Then $N_S$ is realistically isomorphic to a minor of $N$.
\end{thm}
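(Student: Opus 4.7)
The plan is to replace, at each leaf of the star-decomposition $(T_S, R_S)$, the virtual element $e(tt')$ by an honest element of $N$ and to contract or delete all remaining elements of that leaf, so that the resulting minor of $N$ is exactly $N_S$ up to renaming. For each edge $tt'$ of $T$ with $t \in S$ and $t' \notin S$, the pair $(A_{tt'}, B_{tt'}) = (R_S(l(tt')), E(N) \setminus R_S(l(tt')))$ is a $2$-separation of $N$, so by \autoref{gluing2} we have $N = N(A_{tt'}) \oplus_2 N(B_{tt'})$ with gluing element $e(tt')$; since $e(tt')$ is neither a loop nor a coloop in $N(A_{tt'})$, there is a circuit $C_{tt'}$ of $N(A_{tt'})$ through $e(tt')$. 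Pick any $f(tt') \in C_{tt'} \setminus \{e(tt')\}$ and set $I(tt') = C_{tt'} \setminus \{e(tt'), f(tt')\}$ and $J(tt') = A_{tt'} \setminus C_{tt'}$. With $I = \bigcup_{tt'} I(tt')$ and $J = \bigcup_{tt'} J(tt')$, the minor $N' = N/I \setminus J$ has ground set in natural bijection with $E(N_S)$ via the map $\phi$ that fixes $\bigcup_{v \in S} R(v)$ pointwise and sends each $e(tt')$ to $f(tt')$. The claim is that $\phi$ is a realistic isomorphism $N_S \to N'$.

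I would verify this through \autoref{hybrid}, by matching circuits with circuits and cocircuits with cocircuits under $\phi$. The circuits of $N_S$ are by definition the sets $(o \cap R_S(S)) \cup \{e(tt') : o \cap R_S(l(tt')) \neq \emptyset\}$ for $N$-circuits $o$ not contained in a single $R_S(l(tt'))$, while by \autoref{rest_cir} the circuits of $N'$ are the minimal nonempty sets of the form $\tilde o \setminus I$ for $N$-circuits $\tilde o$ disjoint from $J$. Given a circuit $\tilde o \setminus I$ of $N'$, its canonical precircuit in $(T_S, R_S)$ produces an $N_S$-circuit whose $\phi$-image is $\tilde o \setminus I$; the observation keeping this consistent is that $\tilde o \cap R_S(l(tt'))$ cannot lie in $I(tt')$ without containing $f(tt')$, because $C_{tt'} \setminus \{f(tt')\}$ is independent in $N(A_{tt'})$. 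Conversely, given an $N$-circuit $o$ with associated $N_S$-circuit $\hat o(S)$, one builds an $N$-circuit $\tilde o$ realising $\phi(\hat o(S))$ as $\tilde o \setminus I$ by applying the 2-sum circuit exchange at each leaf used by $o$, swapping $(o \cap A_{tt'}) + e(tt')$ for $C_{tt'}$; this is straightforward for finitely many used leaves by iteration of \autoref{gluing2}. The cocircuit correspondence is dual.

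The main technical obstacle is this second direction when $o$ uses infinitely many leaves, where iterating the 2-sum swap would be transfinite. I would handle this by not constructing $\tilde o$ explicitly but instead verifying (O1) and (O2) of \autoref{hybrid} for $\phi(\Ccal(N_S))$ and $\phi(\Ccal(N_S^*))$ inside $N'$: (O1) follows at once from the corresponding property in $N$ combined with \autoref{2SumsO1}, while (O2) is checked by a case analysis on the distinguished element, in each case assembling the required circuit or cocircuit by combining an object from $N$ with the single-leaf 2-sum structure whose interaction with the chosen element is relevant.
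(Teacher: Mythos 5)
Your construction of the minor is sound and is essentially a reparametrisation of the paper's: you choose a circuit $C_{tt'}$ of $N(A_{tt'})$ through the virtual element and take $I(tt') = C_{tt'}\setminus\{e(tt'),f(tt')\}$, whereas the paper chooses a base $s(x)$ of $N.Y_x$; both choices make the single surviving leaf element behave exactly like $e(tt')$, because $I(tt')$ is independent in the contraction $N.A_{tt'}$ (being $C_{tt'}-e(tt')-f(tt')$, a circuit of $N.A_{tt'}$ minus an element). Your ``first direction'' argument, that every $N'$-circuit is the $\phi$-image of an $N_S$-circuit, is correct and is the analogue of the paper's Claim; you also correctly identify that the converse cannot be handled by iterating 2-sum swaps when infinitely many leaves are used.

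The weak point is the final step. Invoking \autoref{hybrid} for $\phi(\Ccal(N_S))$ and $\phi(\Ccal(N_S^*))$ only re-proves that $N_S$ is a matroid, which is already known; it says nothing about the minor $N'$ you built. What you actually have in hand after the ``first direction'' and its dual is $\Ccal(N')\subseteq\phi(\Ccal(N_S))$ and $\Ccal(N'^*)\subseteq\phi(\Ccal(N_S^*))$, and what is needed is the step from these containments to equality. The paper closes this gap with \autoref{cir_c_scrawl}: feed it $M=N'$, $\Ccal=\phi(\Ccal(N_S))$, $\Dcal=\phi(\Ccal(N_S^*))$; the orthogonality hypothesis is automatic because $N_S$ is a matroid, and the conclusion $\Ccal(N')\subseteq\phi(\Ccal(N_S))\subseteq\Scal(N')$ forces every element of $\phi(\Ccal(N_S))$ to equal the $N'$-circuit it contains, giving $\Ccal(N')=\phi(\Ccal(N_S))$. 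Alternatively, you could prove directly that any two matroids $M_1,M_2$ on the same ground set with $\Ccal(M_1)\subseteq\Ccal(M_2)$ and $\Ccal(M_1^*)\subseteq\Ccal(M_2^*)$ coincide (if $o\in\Ccal(M_2)\setminus\Ccal(M_1)$ it must be $M_1$-independent, so some $M_1$-cocircuit meets it exactly once, and that cocircuit is an $M_2$-cocircuit, contradiction). Either way, replace the appeal to \autoref{hybrid}; as written, the argument does not reach the conclusion. One further small caveat: ``the cocircuit correspondence is dual'' deserves a word of care, since $J(tt')$ is not of the form ``cocircuit minus two elements'' and the verification that $f(tt')\in\tilde b$ whenever $\tilde b$ meets $A_{tt'}$ goes through the orthogonality $|C_{tt'}\cap b_A|\ge 2$ in $N(A_{tt'})$ rather than by a formal dualisation of your circuit argument.
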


\begin{proof}
Without loss of generality, we may assume that $N$ is connected.
In particular all the separations associated to edges of $T$ are exact $2$-separations.
Let $\partial S$ be the set of those nodes 
$x$ of $T$ that have a neighbour $t(x)$ in $S$ but are not in $S$.

First we define a minor $N'$ of $N$ from which we shall define a realistic isomorphism to $N_S$.
%Let $X = \bigcup_{v \in S}R(v)$. 
%For $x\in \partial S$, let $Y_x$ be the set of those edges of $N$ that are in 
%$M(t)$ with $t\in T_{t(x)\to x}$.

For each $x \in \partial S$, let $s(x)$ be a base of $N$ contracted onto $R_S(l(t(x)x))$. 
Since the separation associated to the edge $t(x)x$ is an exact 2-separation,
we can add a single element $e(x)$ to $s(x)$
to make it a base of $N$ restricted onto $R_S(l(t(x)x))$.

%Let $s_1(x)$ be a base of $N$ restricted onto $Y_x$ extending $s(x)$.
%Since the separation associated to the edge $t(x)x$ is an exact 2-separation
%$s_1(x)\sm s(x)$ contains precisely one element which we call $e(x)$.
Let $K$ be the set of all those $e(x)$.
%We obtain $\tilde N$ from $N$ by contracting all the sets $s(x)$.
% we obtain $N''$ from $\tilde N$ by restricting onto $X\cup K$.
We obtain $N'$ from $N$ by contracting all 
the sets $s(x)$ and then restricting onto $R_S(S)\cup K$.
Let the matroid $N''$ be obtained from $N'$ by replacing the element $e(x)$ by the virtual element $e(t(x)x)$. This defines a realistic isomorphism $\alpha: N''\to N'$. We will now show that $N_S = N''$.
%It is worth noting that in $\tilde N$ 
%any two possible choices for $e(x)$ land up being in parallel so that $N'$ does not depend on %the choices of the $e(x)$.
%
%\vspace{0.3 cm}
%
%Let $\Ccal$ be the set of nonempty underlying sets of canonical precircuits of $N$-circuits restricted to $S$, and let $\Dcal$ be the set of nonempty underlying sets of canonical precocircuits  of $N$-cocircuits restricted to $S$. 
%
%Having defined $N'$, it remains to show that $\Ccal$ is the set of $N'$-circuits and $\Dcal$
%is the set of $N'$-cocircuits. 
This will be done using \autoref{cir_c_scrawl} applied to $N''$, with $\Ccal$ the set of $N_S$-circuits  and $\Dcal$ the set of $N_S$-cocircuits.
So we first have to check that the assumptions of this Lemma are satisfied. The first assumption is proved as follows:
\begin{claim}
Every $N''$-circuit is an element of $\Ccal$.
\end{claim}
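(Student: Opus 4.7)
My strategy is to associate to each $N''$-circuit $o''$ an $N$-circuit $o$ whose induced torso-circuit at the central node $S$ of the star-decomposition recovers $o''$ under the identification of $e(x)$ with $e(t(x)x)$. Set $o' := \alpha(o'')$, a circuit of $N' = N/c \setminus d$, where $c = \bigcup_{x \in \partial S} s(x)$ and $d = E(N) \setminus (R_S(S) \cup K \cup c)$. By \autoref{rest_cir} there is an $N$-circuit $o$ with $o' \subseteq o \subseteq o' \cup c$. Because $o \setminus o' \subseteq c$ is disjoint from $R_S(S) \cup K$, one has $o \cap R_S(S) = o'' \cap R_S(S)$ and, for each $x \in \partial S$, the inclusion $o \cap A_x \subseteq s(x) \cup \{e(x)\}$, where $A_x := R_S(l(t(x)x))$.

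The crucial step is to show that, for every $x \in \partial S$,
\[
o \cap A_x \neq \emptyset \iff e(x) \in o.
\]
The direction $\Leftarrow$ is immediate. For $\Rightarrow$, suppose $o \cap A_x \neq \emptyset$. If $o \subseteq A_x$, then $o$ would be a nonempty circuit of $N | A_x$ contained in the base $s(x) \cup \{e(x)\}$, which is absurd; hence $o$ also meets $B_x := E(N) \setminus A_x$. The 2-sum representation $N = N(A_x) \oplus_2 N(B_x)$ supplied by \autoref{gluing2} then forces $o \cap A_x$ to be a circuit of $N/B_x$. Since $s(x)$ is a base of $N/B_x$ and in particular independent, every nonempty circuit of $N/B_x$ contained in $s(x) \cup \{e(x)\}$ must contain $e(x)$, so $e(x) \in o$.

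Since $\alpha$ identifies $e(x)$ with $e(t(x)x)$, the equivalence rewrites as $o \cap A_x \neq \emptyset \iff e(t(x)x) \in o''$, and the ruling-out of $o \subseteq A_x$ shows that $o$ is not contained in any single branch. Therefore the torso-circuit of $N_S$ induced by $o$ is
\[
(o \cap R_S(S)) \cup \{e(t(x)x) : o \cap A_x \neq \emptyset\} = (o'' \cap R_S(S)) \cup \{e(t(x)x) : e(t(x)x) \in o''\} = o'',
\]
establishing $o'' \in \Ccal$. The main difficulty is the forward direction of the key equivalence: there the 2-separation structure of $N$, the choice of $s(x)$ as a base of the contraction $N/B_x$, and the 2-sum decomposition from \autoref{gluing2} all combine; everything else is routine bookkeeping around the identification $\alpha$.
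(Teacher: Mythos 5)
Your proof is correct and follows essentially the same approach as the paper: extend $\alpha(o'')$ to an $N$-circuit $o$ via Lemma~\ref{rest_cir} and show that $o$ meets the branch $A_x$ exactly when $e(x)\in o$, so that the torso circuit of $o$ at the central node of the star-decomposition is $o''$. The only minor difference is that for the key step you invoke the $2$-sum decomposition from Lemma~\ref{gluing2} to show $o\cap A_x$ is a single circuit of $N/B_x$, whereas the paper uses the shorter general fact that $o\cap A_x$ is a union of circuits of the contraction $N.A_x$, which must be empty when $e(x)\notin o$ because it would then lie in the independent set $s(x)$.
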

\begin{proof}
Let $o''$ be an $N''$-circuit. 
Let $o'=\alpha(o'')$.
%Let $A(o')$ be the set of those $e(x)$ such that $e(t(x)x)\in o'$. Let $o''$ be the set of %those elements of $o'$ that are in $E(N)$ together with $A(o')$.
%Clearly, $o''$ is an $N''$-circuit, 
Then $o'$ extends to an $N$-circuit $o$ using additionally only elements of sets $s(x)$ by \autoref{rest_cir}.

It remains to show for each $e(x)\notin o$ that $o$ does not use any element of $s(x)$, 
since then the underlying set of the canonical precircuit of $o$ restricted to $S$ will be $o''$.
Now $o\cap R_S(l(t(x)x))$ must be a union of circuits of the contraction $N.R_S(l(t(x)x))$ of $N$ onto $R_S(l(t(x)x))$. For $e(x)\notin o$ the set $o\cap R_S(l(t(x)x))$ must be empty as $o\cap R_S(l(t(x)x)) \se s(x)$ which is independent in that contraction. Thus every $N''$-circuit is the underlying set of the canonical precircuit of some $N$-circuit restricted to $S$.
\end{proof}

The dual argument shows that every $N''$-cocircuit is the underlying set of the canonical precircuit of some $N$-cocircuit restricted to $S$.

The last assumption of \autoref{cir_c_scrawl}, that no $o'\in \Ccal$ meets some $b'\in \Dcal$ in a single element $e$, is true since $N_S$ is a matroid. Applying \autoref{cir_c_scrawl} now gives that the $N''$-circuits are the minimal nonempty elements of $\Ccal$, which is the same as saying that they are the elements of $\Ccal$ since $\Ccal$ is the set of circuits of the matroid $N_S$, and no circuit of a matroid is included in any other. Similarly, the $N''$-cocircuits are the elements of $\Dcal$. This completes the proof.

\end{proof}

\begin{cor}\label{torso_minor}
Let $N$ be a matroid with a tree-decomposition of adhesion 2.
Then every torso $M(v)$ of this tree-decomposition is isomorphic to a minor of $N$.
\end{cor}

\begin{proof}
 Apply \autoref{X1} in the case that $S$ consists only of the node $v$.
\end{proof}

\section{Characterising the matroids arising from a nice ray of matroids}\label{sec:ray_case}

For this section, we fix a ray $\Rcal = (M_i | i \in \Nbb)$ of matroids of overlap 1. We call the end of this ray $\omega$. 
We refer to the element shared by $M(i)$ and $M(i + 1)$ as $e(i)$. We say that a matroid is an {\em $\Rcal$-matroid} if all of its circuits are circuits of $M_{\{\omega\}}(\Rcal)$ (in which the circuits are permitted to use the end) and all of its cocircuits are cocircuits of $M_{\emptyset}(\Rcal)$ (in which the circuits are not permitted to use the end). Our aim will be to give an explicit description of the collection of $\Rcal$-matroids. However, before we can do this we will impose a mild condition to exclude examples like the one below.

\begin{eg}
Here the tree $T$ is a ray and each $M(t)=M(C_4)$, arranged as in \autoref{fig:nice_needed}.
Then $M_\emptyset(\Tcal)$ is the free matroid but $M_{\Omega(T)}(\Tcal)$
consists of a single infinite circuit. So any pair of elements forms an $M_{\Omega(T)}(\Tcal)$-cocircuit which is not an $M_\emptyset(\Tcal)$-cocircuit.

 \begin{figure}
\begin{center}
 \includegraphics[width=8cm]{./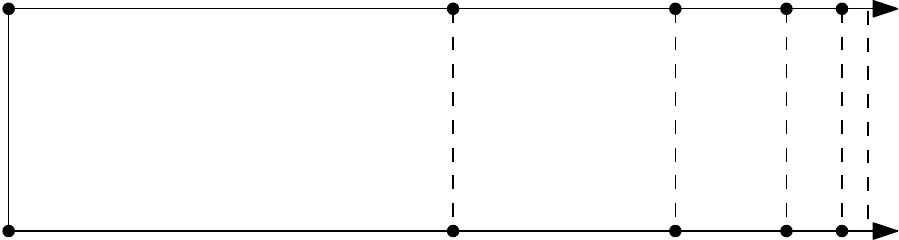}
\end{center} 
\caption{A badly behaved tree of matroids}\label{fig:nice_needed}
\end{figure}
\end{eg}

\begin{dfn}
A precircuit $(S, o)$ for a tree $\Tcal = (T, M)$ of matroids of overlap 1 is called a {\em phantom precircuit} if there is an edge $tt'$ of $S$ such that $o(v) \cap E(\Tcal) = \emptyset$ for $v \in V(S_{t \to t'})$.

$\Tcal=(T,M)$ is \emph{nice} if neither $\Tcal$ nor $\Tcal^*$ has any phantom precircuits.
\end{dfn}
Informally, a phantom precircuit is one which invisibly uses the nodes beyond some edge $tt'$, in that its supporting tree goes beyond that edge, but in such a way that this is not visible from its underlying set.

Note that $\Tcal=(T,M)$ is nice iff there is no $tt'\in E(T)$ such that
in $\Tcal_{t \to t'} = (T_{t \to t'}, M \restric_{V(T_{t \to t'})})$ the element $e(tt')$ is either a loop in $M_{\Omega(\Tcal_{t \to t'})}(\Tcal_{t \to t'})$ 
or a coloop $M_{\emptyset}(\Tcal_{t \to t'})$.

\begin{lem}\label{ray+td_nice}
 Let $N$ be a matroid with a tree-decomposition $(T,R)$ of adhesion 2.
\begin{enumerate}
 \item For every $N$-circuit $o$ its corresponding precircuit $(S_o,\hat o)$ is not phantom.
\item If $T$ is a ray, and there are a circuit $o$ and a cocircuit $b$ of $N$ that both have elements in infinitely many of the $R(v)$, then $\Tcal(N,T,R)$ is nice.
\end{enumerate}
\end{lem}

\begin{proof}
(1) follows from the definition of $S_o$.

For (2), let $T=t_1,t_2,\ldots$ be a ray. 
Now suppose for a contradiction that 
there is a phantom precircuit $(S_c,c)$. Then for all sufficiently large $n$,
the circuit $c(t_n)$ consists of $e(t_{n-1}t_n)$ and $e(t_{n}t_{n+1})$.
In other words, $e(t_{n-1}t_n)$ and $e(t_{n}t_{n+1})$ are in parallel.

So $c(t_n)\se \hat o(t_n)$, hence $c(t_n)= \hat o(t_n)$.
This contradicts (1).
The case that there is a phantom precocircuit $(S_c,c)$ is dual.
Hence $\Tcal(N,T,R)$ is nice.
\end{proof}

\begin{lem}\label{cir_in_omega_cir}
Let $\Tcal=(T,M)$ be a nice tree of matroids, then every $\emptyset$-circuit is
an $\Omega(T)$-circuit. Dually, every $\emptyset$-cocircuit is an $\Omega(T)$-cocircuit.
\qed
\end{lem}

\begin{lem}\label{cir_in_cir}
Let $\Tcal=(T,M)$ be a nice tree of matroids, and $N$ be a matroid such that 
$\Ccal(N)\se \Ccal(M_{\Omega(T)}(\Tcal))$ and  $\Ccal(N^*)\se \Ccal(M^*_{\emptyset}(\Tcal))$.

Then $\Ccal(M_{\emptyset}(\Tcal))\se \Ccal(N)$ and  $\Ccal(M^*_{\Omega(T)}(\Tcal))\se \Ccal(N^*)$.
\end{lem}

\begin{proof}
 By duality, it suffices to prove only that $\Ccal(M_\emptyset(T))\se \Ccal(N)$.
So let $o\in \Ccal(M_\emptyset(T))$. 
Since $o$ never meets an element of $\Ccal(M^*_{\emptyset}(T))$ just once,
it never meets an element of $\Ccal(N^*)$ just once.
Hence $o$ includes an $N$-circuit $o'$ by \autoref{is_scrawl}.
Thus $o'\in  \Ccal(M_{\Omega(T)}(T))$. By \autoref{cir_in_omega_cir},
we must have $o'=o$. So $o\in \Ccal(N)$, as desired.
\end{proof}

From now on, we shall assume that $\Rcal$ is a nice ray of matroids, and that $N$ is an $\Rcal$-matroid. We shall refer to the nodes of the supporting ray of $\Rcal$ as $t_1, t_2, \ldots$. For brevity, we shall denote $M_{t_i}$ by $M_i$ and $e(t_i)$ by $e(i)$ for any $i \in \Nbb$.

We say that a subset of $E(\Rcal)$ is {\em prolonged} if it meets infinitely many of the sets $E(M_i)$ with $i \in \Nbb$.

We begin by defining a fundamental equivalence relation on the potential prolonged circuits and cocircuits of $N$. Let $\Ccal_{\infty}$ be the set of prolonged elements of $\Ccal(M_{\{\omega\}}(\Rcal))$ and $\Dcal_{\infty}$ be the set of prolonged elements of $\Ccal(M^*_{\emptyset}(\Rcal))$. We define a relation $\sim$ from $\Ccal_{\infty}$ to $\Dcal_{\infty}$ by $o \sim b$ if and only if $o \cap b$ meets only finitely many of the sets $E(M_i)$. We can think of $\sim$ as a relation on the set $\Ccal_{\infty} \sqcup \Dcal_{\infty}$, and we define $\simeq$ to be the equivalence relation generated by this relation. The relation $\simeq$ restricts to equivalence relations on each of $\Ccal_{\infty}$ and $\Dcal_{\infty}$; we denote the equivalence class of $o$ in $\Ccal_{\infty}/\simeq$ by $[o]$ and the equivalence class of $b$ in $\Dcal_{\infty}/\simeq$ by $[b]$.

The first important fact about this relation is that the set of prolonged circuits of $N$ must be closed under $\simeq$ (within $\Ccal_{\infty}$). This follows from the following lemmas.

\begin{lem}\label{finfix}
Let $o$ be a prolonged circuit of $N$, and let $o' \in \Ccal_{\infty}$ such that their symmetric difference $o \triangle o'$ meets only finitely many of the sets $E(M_i)$. Then $o'$ is also a circuit of $N$.
\end{lem}
\begin{proof}
Let the circuits $o$ and $o'$ be represented by the precircuits $(I, \hat o)$ and $(I', \hat o')$.

We consider first of all the case where $o \triangle o'$ is a subset of $E(M_1)$. We apply Lemma \ref{good_cir_eli} to $\hat o(1)$ and $\hat o'(1)$, obtaining a set $X \subseteq \hat o(1) - e(1)$ and a family $(o_x | x \in X)$ of circuits of $M_1$ with $o_x \cap (X \cup \{e(1)\}) = \{x\}$ for each $x \in X$ such that the only circuit $\bar o$ with $e(1) \in \bar o \subseteq \hat o(1) \cup (\bigcup_{x \in X} o_x) \setminus X$ is $\hat o'(1)$. Each of the $o_x$ is a circuit of $M_{\emptyset}(\Rcal)$, and so of $N$. Now we apply the circuit elimination axiom (C3) in $N$ to the circuit $o$ and the circuits $o_x$, eliminating the set $X$ and keeping some $e \in o \cap E(M_n)$ with $n > 1$. We obtain an 
$N$-circuit $o''$ with $e \in o'' \subseteq o \cup (\bigcup_{x \in X} o'_x) \setminus X$.

Let $o''$ be represented by the precircuit $(I'', \hat o'')$. For any $i > 1$ with $i \in I''$, we have $\{e(i-1), e(i)\} \subseteq \hat o(i)$ and so $\hat o''(i) \subseteq \hat o(i)$, which implies that $\hat o''(i) = \hat o(i)$ and in particular $\{e(i-1), e(i)\} \subseteq \hat o''(i)\}$ so that both $i - 1$ and $i + 1$ must be in $I''$. So since $I''$ contains $n$, it must contain all natural numbers. Furthermore, since $e(k) \in \hat o''(k) \subseteq \hat o(k) \cup (\bigcup_{x \in X} o_x) \setminus X$ we must have $\hat o''(k) = \hat o'(k)$. Thus $o'' \triangle o' \subseteq \bigcup_{i < k} E(M_i)$.
This completes the proof of the special case. 

We can now reduce the more general statement of the lemma to the special case above as follows: for any $k$, we can obtain a new ray of matroids whose first element is the 2-sum of the first $k$ matroids along $\Rcal$ and where the $i$\textsuperscript{th}\ matroid with $i > 1$ is $M_{i - k + 1}$. Since $\Ccal_{\infty}$ is preserved under this operation, we can apply it to produce a new ray of matroids where the ground set of the first element includes $o \triangle o'$. Arguing as above in this ray of matroids, we are done.
\end{proof}

\begin{lem}\label{cir_closed}
Let $o$ be a prolonged circuit of $N$, and let $b \in \Dcal_{\infty}$ and $o' \in \Ccal_{\infty}$ such that $o \sim b$ and $o' \sim b$. Then $o'$ is also a circuit of $N$.
\end{lem}
\begin{proof}
Let the circuits $o$ and $o'$ be represented by the precircuits $(I, \hat o)$ and $(I', \hat o')$, and let the cocircuit $b$ be represented by the precocircuit $(J, \hat b)$.
Choose some $e \in o \cap E(M_n)$ for some $n$, and choose some $k > n$ such that for $i \geq k$ we have $\hat o(i) \cap \hat b(i) = \hat o'(i) \cap \hat b(i) = \{e(i-1), e(i)\}$. For each $i \geq k$, we may by Lemma \ref{good_cir_eli} find a set $X_i \subseteq \hat o(i) - e(i-1)$ and a family $(o_x | x \in X_i)$ of circuits of $M_i$ with $o_x \cap (X_i \cup \{e(i-1)\}) = \{x\}$ for each $x \in X_i$ such that the only circuit $o$ with $e(i-1) \in o \subseteq \hat o(i) \cup (\bigcup_{x \in X_i} o_x) \setminus X_i$ is $\hat o'(i)$. Note that for $x \in X_i$ we have $o_x \cap \hat b(i) \subseteq \{e(i)\}$, so that $o_x \cap \hat b(i) = \emptyset$ and in particular $e(i) \not \in o_x$. So $o_x$ is a circuit of $M_{\emptyset}(\Rcal)$, and so of $N$. Now we apply the circuit elimination axiom (C3) in $N$ to the circuit $o$ and the circuits $o_x$ with $x \in \bigcup_{i \geq k}X_i$, eliminating the set $\bigcup_{i \geq k}X_i$ and keeping the element $e$. We obtain an $N$-circuit $o''$ with $e \in o'' \subseteq o \cup (\bigcup_{i \geq k}\bigcup_{x \in X_i} o_x) \setminus \bigcup_{i \geq k}X_i$.

Let $o''$ be represented by the precircuit $(I'', \hat o'')$. Then for any $i < k$ in $I''$ we have $\hat o''(i) \subseteq \hat o(i)$, and so $\hat o''(i) = \hat o(i)$, so that all neighbours of $i$ in $I$ are in $I''$. Thus since $n \in I''$ we get $I'' \cap \{1, .., k\} = I \cap \{1, .., k\}$. Now we show by induction on $i$ that if $i \geq k$ then $i \in I$ and $\hat o''(i) = \hat o'(i)$. We begin by noting that $i \in I$ and $e(i-1) \in \hat o''(i)$. For the base case this follows from the fact that $\hat o''(k-1) = \hat o(k-1)$ and otherwise it follows from the induction hypothesis. So we have $e(i-1) \in \hat o''(i) \subseteq \hat o(i) \cup (\bigcup_{x \in X_i} o_x) \setminus X_i$, which implies that $\hat o''(i) = \hat o'(i)$ as required. Thus $o'' \triangle o' \subseteq \bigcup_{i < k} E(M_i)$ and so applying Lemma \ref{finfix} we get that $o'$ is a circuit of $N$.
\end{proof}

Thus the set of circuits of $N$ must consist of $\Ccal(M_{\emptyset}(\Rcal))$ together with a union of some $\simeq$-equivalence classes\footnote{As usual, we denote the union of the elements of a set $S$ of equivalence classes by $\bigcup S$.} in $\Ccal_{\infty}$. In fact, we can show that this is the only restriction.

\begin{thm}\label{ray_case}
Let $\Phi$ be any subset of $\Ccal_{\infty} / \simeq$. Then there is an $\Rcal$-matroid $M_{\Phi}(\Rcal)$ whose set of circuits is $\Ccal(M_{\emptyset}(\Rcal)) \cup \bigcup \Phi$. The set of cocircuits of this matroid is $\Ccal(M^*_{\{\omega\}}(\Rcal)) \cup \bigcup \Phi^*$, where $\Phi^* \subseteq \Dcal_{\infty}/\simeq$ is the set of equivalence classes $[b]$ such that there is no $[o] \in \Phi$ with $o \simeq b$. Conversely, the set of circuits of any $\Rcal$-matroid has the form $\Ccal(M_{\emptyset}(\Rcal)) \cup \bigcup \Phi$ with $\Phi$ a subset of $\Ccal_{\infty} /\simeq$.
\end{thm}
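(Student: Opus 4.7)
The plan is to treat the converse and existence directions separately, invoking the hybrid axiomatization \autoref{hybrid} for existence. For the converse, let $N$ be an $\Rcal$-matroid. By \autoref{cir_in_cir} applied to $\Rcal$, $\Ccal(M_{\emptyset}(\Rcal)) \subseteq \Ccal(N)$, so $\Ccal(N) = \Ccal(M_{\emptyset}(\Rcal)) \cup \Ccal'$ where $\Ccal' := \Ccal(N) \cap \Ccal_{\infty}$. If $o \in \Ccal'$ and $o \simeq o'$ in $\Ccal_{\infty}$, unfolding the equivalence relation gives a finite chain $o = o_0, b_0, o_1, b_1, \ldots, o_m = o'$ with $o_k \in \Ccal_{\infty}$, $b_k \in \Dcal_{\infty}$, $o_k \sim b_k$, and $o_{k+1} \sim b_k$; iterated application of \autoref{cir_closed} along this chain shows $o_k \in \Ccal(N)$ for each $k$, so $\Ccal'$ is a union of $\simeq$-classes and $\Ccal(N)$ has the required form.

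For existence, given $\Phi$, define $\Ccal$ and $\Dcal$ as in the statement. Axioms (C1), (C2) are immediate from \autoref{cir_in_omega_cir}: every element of $\Ccal$ is a nonempty circuit of $M_{\{\omega\}}(\Rcal)$, hence no two are comparable, and dually for $\Dcal$. For (O1), fix $o \in \Ccal$ and $b \in \Dcal$. If either $o$ is a finite circuit or $b$ is a finite cocircuit, \autoref{2SumsO1} (applied with $\Psi = \emptyset$ or $\Psi = \{\omega\}$ respectively) gives $|o \cap b| \neq 1$. If instead $o \in \bigcup \Phi$ and $b \in \bigcup \Phi^*$, the definition of $\Phi^*$ forbids $o \simeq b$, so $o \not\sim b$, and $o \cap b$ meets infinitely many $E(M_i)$, hence is in fact infinite.

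For (O2), given $E = P \dot\cup Q \dot\cup \{e\}$, I apply (O2) in both $M_{\emptyset}(\Rcal)$ and $M_{\{\omega\}}(\Rcal)$. Three of the four combined outcomes yield either a finite circuit in $P + e$ through $e$ (an element of $\Ccal$) or a finite cocircuit in $Q + e$ through $e$ (an element of $\Dcal$). The remaining case produces a prolonged $o \in \Ccal_{\infty}$ with $o \subseteq P + e$, $e \in o$, and a prolonged $b \in \Dcal_{\infty}$ with $b \subseteq Q + e$, $e \in b$; then $o \cap b = \{e\}$, so $o \sim b$. If $[o] \in \Phi$, then $o \in \Ccal$ is the witness. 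Otherwise, supposing $[b] \notin \Phi^*$ there would be $[o'] \in \Phi$ with $o' \simeq b$, so $o' \simeq o$ by transitivity in the joint relation, giving $[o] = [o']$ in $\Ccal_{\infty}/\simeq$ and contradicting $[o] \notin \Phi$ and $[o'] \in \Phi$. Hence $[b] \in \Phi^*$ and $b$ witnesses (O2).

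The main obstacle is verifying (CM) for the independence system $\Ical := \{X \subseteq E : X \text{ contains no element of } \Ccal\}$, which is sandwiched between $\Ical(M_{\{\omega\}}(\Rcal))$ and $\Ical(M_{\emptyset}(\Rcal))$. Starting from $I \in \Ical$ with $I \subseteq X$, I would first apply (IM) in $M_{\{\omega\}}(\Rcal)$ to obtain a maximal $B_0 \in \Ical(M_{\{\omega\}}(\Rcal)) \cap [I, X]$ (automatically in $\Ical$), and then extend $B_0$ transfinitely by adding, at each successor step, an element $x$ whose fundamental $M_{\{\omega\}}$-circuit with respect to the current $B$ is prolonged and lies outside $\bigcup \Phi$. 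The delicate issue is at limit stages: a chain of sets in $\Ical$ can have a union that contains a prolonged circuit from $\bigcup \Phi$ even when none of its members does. I expect this to be the principal technical hurdle; the resolution will exploit the ray geometry — each $e \in E$ lies in only finitely many $E(M_i)$ while each prolonged circuit lies in cofinitely many — to order the extension so that such unwanted limits are avoided. Once (CM) is verified, \autoref{hybrid} yields the matroid $M_{\Phi}(\Rcal)$ with circuits $\Ccal$ and cocircuits $\Dcal$, establishing both existence and the cocircuit characterization.
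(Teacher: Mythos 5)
Your converse direction is correct and is essentially the paper's argument (which points to \autoref{cir_closed} for exactly the closure-under-$\simeq$ chain you describe), and your verifications of (C1), (C2), (O1), (O2) agree with the paper in substance. The problem is (CM), where your approach is different from the paper's and has two genuine gaps.

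First, a flaw at the very start: you apply (IM) in $M_{\{\omega\}}(\Rcal)$ to produce a maximal $B_0 \in \Ical(M_{\{\omega\}}(\Rcal)) \cap [I,X]$. But $I \in \Ical$ need not be $M_{\{\omega\}}(\Rcal)$-independent. The hypothesis only forbids $I$ from including an element of $\Ccal(M_{\emptyset}(\Rcal)) \cup \bigcup\Phi$, and whenever $\Phi \neq \Ccal_\infty/\simeq$ there exist prolonged circuits of $M_{\{\omega\}}(\Rcal)$ lying outside $\Ccal$; nothing stops $I$ from containing one of them, in which case $\Ical(M_{\{\omega\}}(\Rcal)) \cap [I,X] = \emptyset$ and the starting move is vacuous. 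Second --- and you flag this yourself --- the transfinite extension is not carried out: what it means to ``order the extension so that such unwanted limits are avoided'' is precisely what needs proving, and the successor-step description (``the fundamental $M_{\{\omega\}}$-circuit with respect to the current $B$'') is not well-defined once the current $B$ is no longer a basis, which happens from the second step onwards.

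The paper sidesteps all of this by working from the other side and using no transfinite recursion. It applies (CM) in $M_{\emptyset}(\Rcal)$ --- this is legitimate, since $I$ is automatically $M_{\emptyset}(\Rcal)$-independent --- to extend $I$ to a maximal $M_{\emptyset}(\Rcal)$-independent $J$ with $I \subseteq J \subseteq X$. If $J$ includes no element of $\bigcup\Phi$ we are done. Otherwise $J$ includes some $o \in \bigcup\Phi$; pick any $e \in o \setminus I$ and set $J' := J - e$. Maximality of $J'$ in $\Ical \cap [I,X]$ is then checked directly: adding any $e' \in X \setminus J'$ either re-creates $o$ (if $e' = e$) or, by maximality of $J$, closes a finite circuit $o'$ of $M_{\emptyset}(\Rcal)$ through $e'$ and $e$; circuit elimination applied to $o$ and $o'$ then produces an element of $\Ccal$ inside $J' + e'$. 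The real content is showing that $J'$ contains no other element of $\bigcup\Phi$. Here the ray geometry is used in a way quite different from what you propose: supposing $o' \in \bigcup\Phi$ with $o' \subseteq J'$, the paper truncates both $o$ and $o'$ at some level $j$ (replacing the local circuit there by a finite one, using niceness of $\Rcal$) to obtain \emph{finite} circuits $c, c'$ of $M_{\emptyset}(\Rcal)$, and then eliminates between $c$ and $c'$ to obtain a finite circuit contained entirely in $J$, contradicting $M_{\emptyset}(\Rcal)$-independence of $J$. In effect, any two prolonged circuits inside an $M_{\emptyset}$-independent set must share a tail, so removing a single element of one of them outside $I$ kills them all. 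You should adopt this deletion-from-above strategy rather than trying to make the build-up-from-below with transfinite recursion work.
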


The remainder of this section will be devoted to proving this theorem. The last sentence is just a restatement of Lemma \ref{cir_closed}, and it is clear that if this construction defines a matroid then that matroid is an $\Rcal$-matroid. So we just have to show that this construction gives a matroid. We will do this using Theorem \ref{hybrid}. We must now show that the conditions of that theorem hold. (C0) and (C1) for $\Ccal(M_{\emptyset}(\Rcal)) \cup \bigcup \Phi$ follow from the same axioms applied to $\Ccal(M_{\{\omega\}}(\Rcal))$, and we get (C0) and (C1) for $\Ccal(M^*_{\{\omega\}}(\Rcal)) \cup \bigcup \Phi^*$ from the same axioms applied to $\Ccal(M^*_{\emptyset}(\Rcal))$. Checking the remaining conditions is the purpose of the following lemmas.
\begin{lem}[O1]
There do not exist $o \in \Ccal(M_{\emptyset}(\Rcal)) \cup \bigcup \Phi$ and $b \in \Ccal(M^*_{\{\omega\}}(\Rcal)) \cup \bigcup \Phi^*$ with $|o \cap b| = 1$. 
\end{lem}
\begin{proof}
Using (O1) in $M_{\emptyset}(\Rcal)$ and $M_{\{\omega\}}(\Rcal)$, we see that the only way this could be possible is if $o \in \bigcup \Phi$ and $b \in \bigcup \Phi^*$. But then $o \sim b$, contradicting the definition of $\Phi^*$.
\end{proof}

\begin{lem}[O2]\label{R_O2}
Let $E=P\dot\cup Q \dot\cup \{e\}$ be a partition of $E$, with $e \in E(M_n)$. Then
either $P+e$ includes an element of $\Ccal(M_{\emptyset}(\Rcal)) \cup \bigcup \Phi$ containing $e$ or
$Q+e$ includes an element of $\Ccal(M^*_{\{\omega\}}(\Rcal)) \cup \bigcup \Phi^*$ containing $e$.
\end{lem}
\begin{proof}
Suppose for a contradiction that neither of these options holds. Applying (O2) in the matroid $M_{\emptyset}(\Rcal)$ we obtain either $o \in \Ccal(M_{\emptyset}(\Rcal))$ with $e \in o \subseteq P + e$ or $b \in \Ccal(M^*_{\emptyset}(\Rcal))$ with $e \in b \subseteq Q + e$. The first of these is impossible by assumption, so there is such a $b$. Similarly, applying (O2) in the matroid $M_{\{\omega\}}(\Rcal)$ we get a circuit $o$ of that matroid with $e \in o \subseteq Q + e$. Furthermore, by our assumptions $o \in \Ccal_{\infty}$ and $b \in \Dcal_{\infty}$. Since $o \cap b = \{e\}$, we have $o \sim b$, and so either $[o] \in \Phi$ or else $[b] \in \Phi^*$, as required.
\end{proof}

\begin{lem}
$\Ccal(M_{\emptyset}(\Rcal)) \cup \bigcup \Phi$ satisfies (CM).
\end{lem}
\begin{proof}
Let $I$ be a set not including any element of $\Ccal(M_{\emptyset}(\Rcal)) \cup \bigcup \Phi$, and $X$ a set with $I \subseteq X \subseteq E$. Since $I$ is $M_{\emptyset}(\Rcal)$-independent, we may apply $(CM)$ in that matroid to extend it to a maximal $M_{\emptyset}(\Rcal)$-independent subset $J$ of $X$. If $J$ fails to include any element of $\bigcup \Phi$ then we are done, so suppose that it does include such an element $o$. Let $e$ be any element of $o$ not contained in $I$, and let $J' = J - e$. We will show that $J'$ is maximal amongst those subsets of $X$ including $I$ but not including any element of $\Ccal(M_{\emptyset}(\Rcal)) \cup \bigcup \Phi$, which will complete the proof of (CM). It is clear that $I \subseteq J' \subseteq J$.

First we show that $J'$ includes no element of $\Ccal(M_{\emptyset}(\Rcal)) \cup \bigcup \Phi$. Suppose for a contradiction that it does include such an element $o'$. Since $J'$ is a subset of the $M_{\emptyset}(\Rcal)$-independent set $J$ we must have $o' \in \bigcup \Phi$. Now suppose that $e \in E(M_i)$, let $o = \underline{(S_o, \hat o)}$ and let $o' = \underline{(S_{o'}, \hat o')}$. Choose $j > i$ with $j \in S_o \cap S_{o'}$, and such that $e(j+1), e(j+2)$ is not a cocircuit of $M_{j+1}$ (this is possible because $\Rcal$ is nice). Let $\overline o$ be a circuit of $M_{j+1}$ with $e(j+1) \in \overline o$ but $e(j+2) \not \in \overline o$. Then we can build an $\emptyset$-precircuit $(S_o \cap \{k | k \leq j + 1\}, \hat c)$ with $\hat c(j + 1) = \overline o$ and $\hat c (k) = \hat o(k)$ for $k \leq j$. We take $c = \underline{(S_o \cap \{k | k \leq j + 1\}, \hat c)}$. We define an $\emptyset$-circuit $c'$ in a similar way from $o'$. Applying circuit elimination in $M_{\emptyset}$ to $c$ and $c'$, keeping 
$e$ and eliminating some element of $\overline o$, we obtain an $\emptyset$-circuit $o''$ with $e \in o'' \subseteq J \cup \overline o$. But since $o'' \cap \overline o$ is a proper subset of $\overline o - e(j+1)$, we must have $o'' \cap \overline o = \emptyset$, and so $o'' \subseteq J$, which contradicts the $M_{\emptyset}$-independence of $J$.

Next we show that $J'$ is maximal. Suppose for a contradiction that there is some $e' \in X \setminus J'$ such that $J' + e'$ includes no element of $\Ccal(M_{\emptyset}(\Rcal)) \cup \bigcup \Phi$. By maximality of $J$, there must be some $M_{\emptyset}$-circuit $o'$ with $e' \in o' \subseteq J + e'$. Since $o' \not \subseteq J'$, we have $e \in o'$. By Lemmas \ref{cireli_partitioning} and \ref{R_O2} we know that $\Ccal(M_{\emptyset}(\Rcal)) \cup \bigcup \Phi$ satisfies circuit elimination. We apply this to $o'$ and $o$, keeping $e'$ and eliminating $e$. This gives an element $o''$ of $\Ccal(M_{\emptyset}(\Rcal)) \cup \bigcup \Phi$ included in $J' + e'$, which is the desired contradiction.
\end{proof}

\section{Representability properties of $M_{\Phi}(\Qcal)$}\label{repQ}
\begin{figure}
\begin{center}
 \includegraphics[width=8cm]{./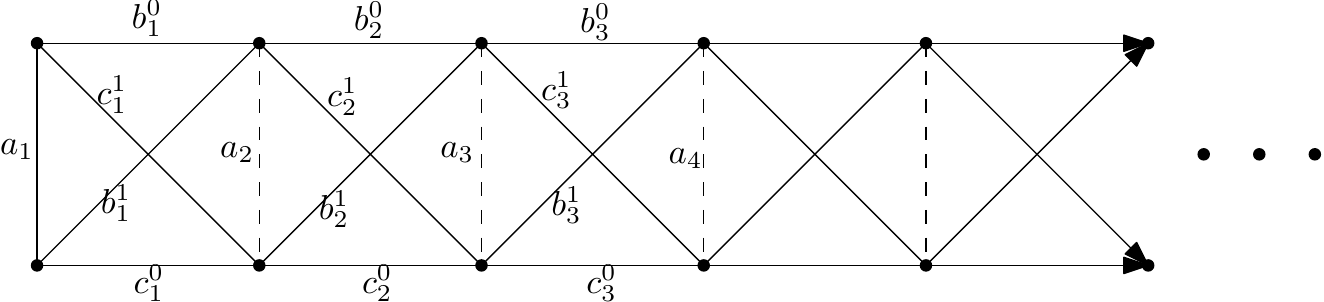}
\end{center} 
\caption{The ray $\Qcal$ of matroids}\label{fig:Qray}
\end{figure}
Recall from the introduction the ray $\Qcal = (K_4 | i \in \Nbb)$ of matroids, in which the edge set of the $i$\textsuperscript{th} copy of $K_4$ is $\{a_i, b^0_i, b^1_i, c^0_i, c^1_i, a_{i + 1}\}$, as shown in figure \autoref{fig:Qray}. The results of the last section give us a simple characterisation of the set of $\Qcal$-matroids. In this section, we shall investigate the extent to which these $\Qcal$-matroids are binary. To make this more precise, we begin by recalling a theorem from~\cite{BC:rep_matroids} excluded minors method:

\begin{thm}\label{char_binary}
Let $M$ be a tame matroid. Then the following are equivalent:
\begin{enumerate}
\item $M$ is a binary thin sums matroid.
\item For any circuit $o$ and cocircuit $b$ of $M$, $|o \cap b|$ is even.
\item For any circuit $o$ and cocircuit $b$ of $M$, $|o \cap b| \neq 3$
\item $M$ has no minor isomorphic to $U_{2,4}$. 
\item If $o_1$, $o_2$ are circuits then $o_1 \triangle o_2$ is empty or includes a circuit.
\item If $o_1$, $o_2$ are circuits then $o_1 \triangle o_2$ is a disjoint union of circuits.
\item If $(o_i | i \in I)$ is a finite family of circuits then $\bigtriangleup_{i \in I}o_i$ is empty or includes a circuit.
\item If $(o_i | i \in I)$ is a finite family of circuits then $\bigtriangleup_{i \in I}o_i$ is a disjoint union of circuits.
\item For any base $s$ of $M$, and any circuit $o$ of $M$, $o = \bigtriangleup_{e \in o \setminus s} o_e$, where $o_e$ is the fundamental circuit of $e$ with respect to $s$.
\end{enumerate}
\end{thm}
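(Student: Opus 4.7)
The plan is to organise the nine conditions into implications anchored at the equivalence (1)$\Leftrightarrow$(4), invoking the deep excluded-minors theorem of \cite{BC:rep_matroids} only where truly necessary. Concretely I would prove the cycle (1)$\Rightarrow$(2)$\Rightarrow$(3)$\Rightarrow$(4)$\Rightarrow$(1) and then attach conditions (5)--(9) as easier equivalents.

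For the implications out of (1): if $M$ is a binary thin sums matroid then circuits and cocircuits correspond to minimal supports in the cycle and cocycle spaces over $\Fbb_2$, so $|o \cap b|$ is the $\Fbb_2$-inner product of a cycle vector and a cocycle vector, hence even, giving (2), which trivially yields (3). For (4), $U_{2,4}$ is not binary and the class of binary thin sums matroids is closed under minors, so $U_{2,4}$ cannot be a minor of $M$. For (5)--(9), a finite symmetric difference of cycle vectors remains in the cycle space, and the support of any cycle vector decomposes as a disjoint union of minimal supports, yielding (8), whence (6), (7) and (5); (9) is just the expansion of the cycle vector of $o$ in the fundamental basis associated to $s$. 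For (3)$\Rightarrow$(4) I would use that $U_{2,4}$ is self-dual, so it has a circuit $o'$ that coincides with a cocircuit $b'$ and thus $|o' \cap b'| = 3$; if $M/C\sm D \cong U_{2,4}$, then \autoref{rest_cir} applied in $M$ and dually in $M^*$ produces a circuit $o \se o' \cup C$ and a cocircuit $b \se b' \cup D$ of $M$ with $o \cap b = o' \cap b'$, contradicting (3).

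The hard step (4)$\Rightarrow$(1) is the cited excluded-minors theorem of \cite{BC:rep_matroids}, where tameness is indispensable --- this is where all the serious work lies, and in particular explains why the wild matroids $M_{\Phi}(\Qcal)$ later in this section will fail some of (1)--(9) even though tame matroids cannot. To reduce any of (5)--(9) back into the cycle, I would run the following uniform argument: given $o$ and $b$ with $|o \cap b| = 3 = \{e, f, g\}$, choose a cobase of $M$ containing $b - f$ so that the fundamental circuits $o_e$ and $o_g$ each meet $b$ in exactly two elements by \autoref{fdt}; then $o \triangle o_e \triangle o_g$ meets $b$ in the singleton $\{f\}$, and by \autoref{is_scrawl} cannot be a scrawl. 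This directly refutes (8), and refutes (7), (6), (5), (9) in turn after applying \autoref{good_cir_eli} to normalise the witnesses.

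The main obstacle is entirely the cited step (4)$\Rightarrow$(1), the substantive content of \cite{BC:rep_matroids}. Among the novel implications, the subtlest point is (3)$\Rightarrow$(4), where one must check that the lifts meet in \emph{exactly} three elements; this follows from the inclusions $o \se o' \cup C$ and $b \se b' \cup D$ together with $C \cap D = \emptyset$, which force $o \cap b \se (o' \cup C) \cap (b' \cup D) = o' \cap b'$, and the reverse inclusion is trivial.
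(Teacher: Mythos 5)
The theorem in question is not proved in this paper: the surrounding text reads ``we begin by recalling a theorem from~\cite{BC:rep_matroids}'', and the result is then used as a black box to diagnose which binary-type properties survive for the wild matroids $M_{\Phi}(\Qcal)$. So there is no in-paper proof to compare against, and what you have written is an independent attempt to re-derive the cited statement.

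Taken on its own terms, parts of the outline are sound. The chain (1)$\Rightarrow$(2)$\Rightarrow$(3) is fine, and your (3)$\Rightarrow$(4) via the two dual applications of \autoref{rest_cir} is correct, including the exactness $o\cap b = o'\cap b'$ (it follows because $o'$ and $b'$ are disjoint from $C\cup D$ while $C\cap D=\emptyset$). The uniform construction --- extend $b-f$ to a cobase so $b$ becomes the fundamental cocircuit $b_f$, invoke \autoref{fdt} to get $o_e\cap b=\{e,f\}$ and $o_g\cap b=\{g,f\}$, then apply \autoref{is_scrawl} --- is a clean refutation of (8). But that is all it refutes directly. For (7) you only learn that any circuit $o''\subseteq o\triangle o_e\triangle o_g$ must be disjoint from $b$, which is not a contradiction; the same issue blocks (5) and (6), which involve symmetric differences of only two circuits, and (9) is not addressed at all. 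The phrase ``after applying \autoref{good_cir_eli} to normalise the witnesses'' is a placeholder, not an argument, and this is precisely where the tameness hypothesis does real work. Similarly, your derivation of (8) from (1) rests on the assertion that the support of a nonempty thin-sums cycle is a disjoint union of circuits; in the infinite setting this is a substantive fact that needs its own proof, not a formal vector-space observation. In short: the proposal correctly isolates (4)$\Rightarrow$(1) as the deepest step, but underestimates the equivalences involving (5)--(9), which are not routine in the infinite tame case and are left with genuine gaps.
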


However, if $M$ is not tame, these conditions are not equivalent. We shall illustrate this by considering which of the conditions hold for each $\Qcal$-matroid (most of which are not tame).

First we give a simple description of the set of $\Qcal$-matroids.

Let $\omega$ be the end of $\Qcal$. For any sequence $(v(i) \in \{0, 1\} | i \in \Nbb)$ there is an infinite circuit $o(0, v)$ of $M_{\{\omega\}}(\Qcal)$ consisting of $a_1$ and the $b_i^{v(i)}$ and $c_i^{v(i)}$ with $i \in \Nbb$. Also, for any $n \in \Nbb$, and any sequence $(v(i)| i \geq n)$ there is such an infinite circuit $o(n, v)$ consisting of $b^{v(n)}_n$, $c^{1-v(n)}_n$ and the $b_i^{v(i)}$ and $c_i^{v(i)}$ with $i > n$. A typical such circuit with $n = 2$ is shown in \autoref{fig:Qcircuit}. It is not hard to show that every infinite circuit arises in this way. Using the fact that $K_4$ is self-dual, we may also check that the sets $o(n, v)$ are precisely the infinite cocircuits of $M_{\emptyset}(\Qcal)$. It is clear that $o(n, v) \sim o(n', v')$ if and only if for all sufficiently large $i$ we have $v(i) \neq v'(i)$, so that the equivalence relation $\simeq$ on $\Ccal_{\infty}$ is given by $o(n, v) \simeq o(n', v')$ if and only if for all sufficiently large $i$ we have $v(i) = v'(i)$.

\begin{figure}
\begin{center}
 \includegraphics[width=8cm]{./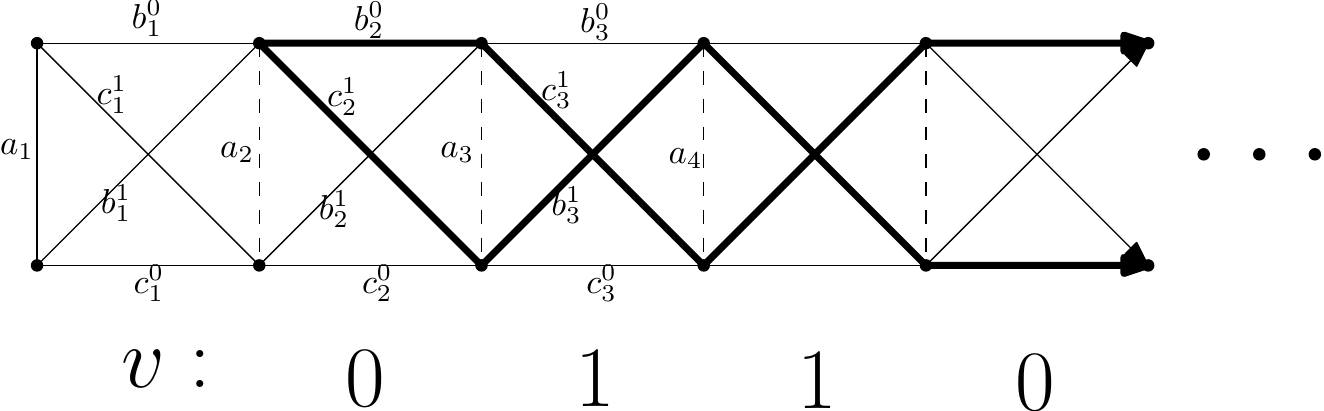}
\end{center} 
\caption{A $\Qcal$-circuit}\label{fig:Qcircuit}
\end{figure}

Applying Theorem \ref{main:thm_intro}, we obtain that for any set $\Phi$ of $\simeq$-equivalence classes, there is a matroid $M_{\Phi}(\Qcal)$ with circuit set $\Ccal(M_{\emptyset}(\Qcal)) \cup \bigcup \Phi$ and cocircuit set $\Ccal(M^*_{\{\omega\}}(\Qcal)) \cup \bigcup \Phi^*$, where $\Phi^* \subseteq \Dcal_{\infty} / \simeq$ is the set of equivalence classes $[b]$ such that there is no $[o] \in \Phi$ with $o \simeq b$. Furthermore, every $\Qcal$-matroid arises in this way.

We now proceed to analyse which of the conditions of Theorem \ref{char_binary} hold for these $\Qcal$-matroids. If $\Phi = \emptyset$ then $M_{\Phi}(\Qcal)$ is finitary, and so is tame. Furthermore, in this case condition (3) is easy to check, and applying the theorem, we obtain that all the conditions hold. Similar arguments apply to $M_{\Ccal_{\infty}/\simeq}(\Qcal)$, which is cofinitary. In the following list, we shall consider what happens for a typical $\Phi$ with $\emptyset \subsetneq \Phi \subsetneq \Ccal_{\infty}/\simeq$. Let $E$ be the common ground set of the matroids $M_{\Phi}(\Qcal)$.

It will turn out that in these nontrivial cases, we always have (3-6) and never have (1-2) or (9), but sometimes have (7) and (8).

\begin{enumerate}
\item We shall show that $M_{\Phi}(\Qcal)$ is not a binary thin sums matroid. Suppose for a contradiction that it is: then there is a set $A$ and a family of functions $(\phi_e \colon A \to \Fbb_2 | e \in E)$ such that the circuits of $M_{\Phi}(\Qcal)$ are the minimal nonempty supports of functions $c \colon E \to \Fbb_2$ such that for any $a \in A$ the sum $\sum_{e \in E} c(e) \phi_e(a)$ is well defined and equal (when evaluated in $\Fbb_2$) to 0. Equivalently, there is a set $\Dcal$ of subsets of $E$ (given by $\{\{e \in E | \phi_e(a) = 1\}| a \in A\}$) such that the circuits of $M_{\Phi}(\Qcal)$ are the minimal nonempty subsets of $E$ whose intersection with any element of $\Dcal$ is finite and of even size.

Since no intersection of any element of $\Dcal$ with any circuit of $M_{\Phi}(\Qcal)$ has size 1, each element of $\Dcal$ is a union of cocircuits of $M_{\Phi}(\Qcal)$. By our assumption that $\Phi \neq \Ccal_{\infty}/\simeq$, there must be some element $s$ of $\Dcal$ which is not a union of cocircuits of $M_{\{\omega\}}(\Qcal)$, and so which includes some element $b$ of $\bigcup \Phi^*$. Let $o$ be any element of $\bigcup \Phi$. Then $o \not \sim b$, and so $o \cap b$ is infinite, and so $o \cap s$ is infinite, which is the desired contradiction.

\item Let $o \in \bigcup \Phi$ and $b \in \bigcup \Phi^*$. Then $|o \cap b|$ is not even: it is infinite.

\item The only ways an intersection of a circuit $o$ with a cocircuit $b$ of $M_{\Phi}(\Qcal)$ can be finite are if $o \in \Ccal(M_{\emptyset}(\Qcal))$ or $b \in \Ccal(M^*_{\{\omega\}}(\Qcal))$, and in either case $|o \cap b|$ is even. In particular, we never have $|o \cap b| = 3$.

\item If $M_{\Phi}(\Qcal)$ had a minor isomorphic to $U_{2, 4}$, then in that minor there would be a circuit and a cocircuit with intersection of size 3. But then by \autoref{rest_cir} we could extend these to a circuit and a cocircuit of $M_{\Phi}(\Qcal)$ with intersection of size 3, and we have just shown that this is impossible. So there is no such minor.

\item Let $o_1$ and $o_2$ be circuits of $M_{\Phi}(\Qcal)$. Using the fact that (5) holds in $M_{\{\omega\}}(\Qcal)$, we obtain that $o_1 \triangle o_2$ includes some element $o_3$ of $\Ccal_{\infty}$. If both $o_1$ and $o_2$ are finite, so is $o_3$, so $o_3$ is a circuit of $M_{\Phi}(\Qcal)$. If $o_1$ is finite but $o_2$ is not, we have $o_2 \simeq o_3$, so that $o_3$ is again a circuit of $M_{\Phi}(\Qcal)$. The case where $o_2$ is finite but $o_1$ is not is similar. If both are infinite, and $o_3$ is also infinite, then for any $i$ bigger than all of $n_1$, $n_2$ and $n_3$ we get that  $o_3$ meets $\{b^0_i, b^1_i, c^0_i, c^1_i\}$, so $(o_1 \triangle o_2)$ also meets this set and so (by construction) must include it. But then we get a finite circuit $o_3' \subseteq o_1 \triangle o_2$ given by $(o_3 \cap \{a_1\} \cup \bigcup_{j < i}\{b^0_j, b^1_j, c^0_j, c^1_j\}) \cup \{b^0_i, b^1_i\}$ for any such $i$, and $o_3'$ is a circuit of $M_{\Phi}(\Qcal)$ because it is finite.

\item This is also true for any matroid $M_{\Phi}(\Qcal)$, by a similar argument to that for (5).

\item If $I$ contains an even number of infinite circuits we may proceed as for (5). But if $I$ contains an odd number of infinite circuits, this will fail for some choices of $\Phi$. For example, for $i \in \{1, 2, 3\}$, let $w_i \colon \Nbb \to \{0, 1\}$ be defined by $w_i(n) = 1$ if $n$ is congruent to $i$ modulo 3 and $w(n) = 0$ otherwise. Let $o_i = o(0, w_i)$. Let $\Phi = \{[o_1], [o_2], [o_3]\}$, so that each $o_i$ is a circuit of $M_{\Phi}(\Qcal)$. It is clear that $o_1 \triangle o_2 \triangle o_3 = o(0, n \mapsto 1)$ includes no infinite circuit of $M_{\Phi}(\Qcal)$.

In order for this condition to be true of $M_{\Phi}$, it is necessary and sufficient for $\Phi$ to be closed under the ternary operation $$f: [o(1, v_1)], [o(1, v_2)], [o(1, v_3)] \mapsto [o(1, \chi_{\{n \in \Nbb | \{i | v_i(n) = 1\} \mbox{\small\ has 1 or 3 elements}\}})]$$

\item Like (7), this condition holds of $M_{\Phi}(\Qcal)$ precisely when $\Phi$ is closed under the ternary operation $f$.

\item Let $o = o(0, v)$ be some infinite circuit in $\bigcup \Phi$, and $s = o(0, w)$ some infinite circuit which is not in $\bigcup \Phi$. Then it is easy to check that $s$ is a base of $M_{\Phi}(\Qcal)$, and that whenever $v(i) \neq w(i)$ the fundamental circuit of $b^{v(i)}_i$ with respect to $s$ contains $a_1$. Since this happens infinitely often, the expression $\bigtriangleup_{e \in o \setminus s} o_e$ is not well defined (it is not a thin symmetric difference), so that this condition always fails for at least one base. 

However, there is also always at least one base $s$ for which we do have $o = \bigtriangleup_{e \in o \setminus s}o_e$ for any circuit $o$ of $M_{\Phi}(\Qcal)$, namely $\{a_1\} \cup \{b^0_i | i \in \Nbb\} \cup \{c^1_i | i \in \Nbb\}$. This choice of base works because it is already a base in $M_{\{\omega\}}(\Qcal)$.
\end{enumerate}

Note that the matroids constructed in \autoref{ray_case} are almost never tame. In fact, any intersection of a circuit in $\Phi$ with a cocircuit in $\Phi^*$ must be prolonged and so infinite. Thus the only cases in which $M_{\Phi}(\Rcal)$ might be tame are those with $\Phi$ empty or equal to the whole set of prolonged circuits.

\section{Proof of the main result}\label{sec:main_result}

We noted above that, if we have a matroid $N$ with a tree-decomposition $(T, R)$ of adhesion 2 then we often cannot reconstruct $N$ from the induced tree of matroids $\Tcal(N,T,R)$. In this section we will explain what additional information is needed to reconstruct $N$. 

In \autoref{sec:ray_case}, we showed that if $T$ is a ray then the extra information we need is the set $\Phi$ of $\simeq$-equivalence classes containing canonical precircuits of circuits of $N$. For more general tree-decompositions $(T, R)$, we will need to make such a specification for each end of $T$.

For each ray $Q\se T$, by \autoref{X1}, there is a minor $N_Q$ of $N$ with tree-decomposition $(Q,R_Q)$, with $R_Q$ chosen so that $\Tcal(N_Q, Q, R_Q)$ is obtained by restricting $\Tcal(N, T, R)$ to $Q$. Since $N_Q$ is a matroid, we get a set $\Phi(Q)$ of $\simeq$-equivalence classes containing canonical precircuits of circuits of $N_Q$. Note that the sets $\Phi(Q)$ for the rays $Q$ in some end $\omega$ are easily determined in terms of each other.

This suggests that the extra information we use should consist of a choice, for each end $\omega$, of a set $\Phi(Q_{\omega})$ of $\simeq$-equivalence classes for some chosen ray $Q_{\omega}$ belonging to $\omega$. This then determines a choice of $\Phi(Q)$ for each other ray $Q$ of $T$. The construction we have just outlined gives a choice of such a $\Phi$ for each tree-decomposition $(T,R)$ of a matroid $N$. We will denote this choice by $\Phi(N, T, R)$.

Given a tree $\Tcal = (T, M)$ of matroids and a specification of $\Phi(Q_\omega)$ for each end $\omega$ of $T$ as above, we say that a $\Tcal$-precircuit $(S_o, \hat o)$ is a $\Phi$-precircuit if and only if for each ray $Q$ of $S_o$ we have $(Q, \hat o \restric Q) \in \bigcup(\Phi(Q))$. A $\Phi$-circuit is a minimal nonempty underlying set of a $\Phi$-precircuit. Dually, a $\Tcal$-precocircuit $(S_b, \hat b)$ is a $\Phi^*$-precocircuit if and only if for each ray $Q$ of $S_b$ we have $(Q, \hat b \restric Q) \in \bigcup((\Phi(Q))^*)$, and we define $\Phi^*$-cocircuits dually to $\Phi$-circuits. If there is a matroid whose circuits are the  $\Phi$-circuits of $\Tcal$ and whose cocircuits are the $\Phi^*$-cocircuits of $\Tcal$ then we will call that matroid the {\em $\Phi$-matroid} for $\Tcal$, and denote it $M_{\Phi}(\Tcal)$.

The purpose of this section is to prove the following.

\begin{thm}\label{recon:arbi}
 Let $(T,R)$ be a tree-decomposition of a matroid $N$ of adhesion $2$.
Then $N$ is a $\Phi$-matroid for $\Tcal(N,T,R)$.
\end{thm}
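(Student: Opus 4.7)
The plan is to apply \autoref{cir_c_scrawl} to the matroid $N$, with $\Ccal$ the set of $\Phi$-circuits of $\Tcal(N,T,R)$ and $\Dcal$ the set of $\Phi^*$-cocircuits. The hypotheses we need are: (a) every $N$-circuit lies in $\Ccal$, (b) every $N$-cocircuit lies in $\Dcal$, and (c) the stronger statement that $|C'\cap D'|\neq 1$ whenever $C'$ is the underlying set of a $\Phi$-precircuit and $D'$ the underlying set of a $\Phi^*$-precocircuit. Given (a)--(c), \autoref{cir_c_scrawl} yields $\Ccal(N) \se \Ccal \se \Scal(N)$ and the dual inclusions. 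Since $\Phi$-circuits form an antichain (they are by definition minimal nonempty underlying sets of $\Phi$-precircuits), any $C \in \Ccal$, being a scrawl of $N$, contains an $N$-circuit $o$ which lies in $\Ccal$ by (a) and must then equal $C$. So $\Ccal = \Ccal(N)$ and dually $\Dcal = \Ccal(N^*)$, giving $N = M_\Phi(\Tcal(N,T,R))$.

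To prove (a), let $o$ be an $N$-circuit with canonical precircuit $(S_o,\hat o)$. I check (i) that $(S_o,\hat o)$ is a $\Phi$-precircuit and (ii) that $o$ is minimal among $\Phi$-precircuit underlying sets, so $o$ is a $\Phi$-circuit. For (i), fix a ray $Q$ of $S_o$ and extend it to a ray $Q'$ of $T$ in the same end. By \autoref{X1}, $N_{Q'}$ is a minor of $N$ whose tree-decomposition $\Tcal(N_{Q'},Q',R\restric_{Q'})$ is the restriction of $\Tcal(N,T,R)$ to $Q'$, and as noted following the definition of the star-decomposition the $N_{Q'}$-circuits are precisely the nonempty underlying sets of canonical precircuits of $N$-circuits restricted to $Q'$. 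Hence $(S_o\cap Q', \hat o\restric_{S_o\cap Q'})$ is the canonical precircuit of an $N_{Q'}$-circuit, so its $\simeq$-class lies in $\Phi(Q')$ by the definition of $\Phi(N,T,R)$. For (ii), any $\Phi$-precircuit underlying set $C'\subsetneq o$ would, by (c) combined with (b), meet no $N$-cocircuit in exactly one element, and hence by \autoref{is_scrawl} would be a scrawl of $N$ containing an $N$-circuit strictly inside $o$, which is absurd since $o$ is an $N$-circuit. Hypothesis (b) is the dual of (a).

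The main obstacle is (c). Suppose $(S_C,\hat c)$ and $(S_D,\hat d)$ are a $\Phi$-precircuit and $\Phi^*$-precocircuit whose underlying sets meet in a single element $e\in R(t_0)$. The matroid axioms in each torso $M(t)$ give $|\hat c(t)\cap \hat d(t)|\neq 1$, and this intersection contains no non-virtual element of $E(N)$ away from $t_0$; hence every node of $T':=S_C\cap S_D$ other than $t_0$ has degree at least $2$ in $T'$, so $T'$ is an infinite subtree of $T$. \autoref{X1} supplies a minor $N_{T'}$ of $N$ with tree-decomposition $\Tcal(N_{T'},T',R\restric_{T'})$ equal to the restriction of $\Tcal(N,T,R)$ to $T'$. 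Let $\bar C$ and $\bar D$ be the underlying sets of $(T',\hat c\restric_{T'})$ and $(T',\hat d\restric_{T'})$ in this restricted tree. A direct computation yields $\bar C\cap \bar D=\{e\}$: a would-be shared boundary virtual element $e(tt')$ with $t\in T'$ and $t'\notin T'$ would force $t'\in S_C\cap S_D=T'$, a contradiction. The remaining task is to show that $\bar C$ is a scrawl and $\bar D$ a coscrawl of $N_{T'}$: for each ray $Q$ of $T'$ the restrictions of $(S_C,\hat c)$ and $(S_D,\hat d)$ to $Q$ lie in $\bigcup\Phi(Q)$ and $\bigcup\Phi(Q)^*$ respectively, so by \autoref{ray_case} their underlying sets form an $N_Q$-circuit and $N_Q$-cocircuit, and piecing these together across the ends of $T'$ witnesses the scrawl and coscrawl properties. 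The dual of \autoref{is_scrawl} then contradicts $|\bar C\cap \bar D|=1$.
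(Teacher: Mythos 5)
The overall scaffolding you set up (apply \autoref{cir_c_scrawl} with $\Ccal$ the $\Phi$-circuits and $\Dcal$ the $\Phi^*$-cocircuits, establish the three hypotheses, then argue minimality) is a correct reformulation of the paper's structure, and your steps (a) and (a)(ii) and the derivation of the conclusion from (a), (b), (c) are fine. But (c), which you rightly identify as the main obstacle, is where the proof actually lives, and the argument you give for it is circular. You reduce to showing that $\bar C$ is an $N_{T'}$-scrawl and $\bar D$ an $N_{T'}$-coscrawl, and for this you invoke \autoref{ray_case} to say the restriction to each ray $Q$ of $T'$ is an $N_Q$-circuit (resp.\ cocircuit), then say ``piecing these together across the ends of $T'$ witnesses the scrawl and coscrawl properties.'' That sentence \emph{is} the theorem. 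Ray-by-ray information does not determine scrawlhood: a single $N_{T'}$-cocircuit can run through infinitely many ends of $T'$ at once, and nothing about the restrictions to individual rays controls how $\bar C$ meets such a cocircuit. Moreover, showing $\bar C$ is a scrawl via \autoref{is_scrawl} would require showing $\bar C$ meets no $N_{T'}$-cocircuit exactly once --- which is statement (c) again for the smaller tree $T'$, with no base case. The paper's actual proof of this step is the entire block from \autoref{binary_tree} through \autoref{all_deg_2}: it shows that $|o\cap b|=1$ forces the common supporting tree $S_b$ to contain a subdivision of the binary tree, then uses two successive applications of (O3$^*$)-minimisation (first against the branching set $B$, then against its complement $B'$) to produce a replacement cocircuit $b_2$ whose supporting tree is a mere subdivided star, contradicting the binary-tree lemma. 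None of that combinatorial content appears in your sketch.

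A secondary issue: you assert ``Hypothesis (b) is the dual of (a),'' but the definitions of $\Phi$-precircuit and $\Phi^*$-precocircuit are not symmetric. The set $\Phi(Q)$ is taken directly from the canonical precircuits of $N_Q$-circuits, whereas $(\Phi(Q))^*$ is defined as the set of classes $[b]$ for which \emph{no} $[o]\in\Phi(Q)$ has $o\simeq b$. To show that the canonical precocircuit of an $N$-cocircuit is a $\Phi^*$-precocircuit one must rule out a conflicting $\simeq$-equivalence at some end, which requires a further application of the ray case (\autoref{ray_case}) along that end; this is precisely \autoref{cocir_underlying} in the paper, and it does not come for free by duality. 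So both (b) and (c) need real arguments that the proposal currently lacks.
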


Note that this is stronger than the claim that each circuit of $N$ arises from a $\Phi$-precircuit: we will show that the circuits of $N$ are precisely the $\Phi$-circuits.

\vspace{0.3 cm}

We will now begin the proof of \autoref{recon:arbi}. More precisely, we will now show that $N = M_{\Phi}(\Tcal(N, T, R))$, where $\Phi = \Phi(N, T, R)$. By definition, every $N$-circuit is the underlying set of some $\Phi$-precircuit
For $N$-cocircuits, we get the analogous fact.

\begin{lem}\label{cocir_underlying}
 Every $N$-cocircuit is an underlying set of a $\Phi^*$-precocircuit.
\end{lem}

\begin{proof}
Suppose not for a contradiction. That is, there are an $N$-cocircuit $b$ 
and an $N$-circuit $o$ such that the underlying set $S_b$ of the canonical
precocircuit $(S_b,\hat b)$ for $b$ and the underlying set $S_o$ for $\hat o$ have a common end $\omega$ such that they are $\simeq$-equivalent at $\omega$. Thus there is a ray $Q=q_1q_2\ldots$ converging to $\omega$ in the intersection of $S_b$ and $S_o$.

If the decomposition tree is a ray, we obtain a contradiction to \autoref{ray_case}.
The other case can be reduced to this case as follows:
We define $R^Q$ to be the following coarsening 
of the torsos $R$. We define $R^Q_{q_i}$ to be the union of all the $R_{v}$ such that in $T$ the vertices
$v$ and $q_i$ can be joined by a path that does not contain any other $q_j$.
Then $(Q,R^Q)$ is a tree-decomposition of $N$ of adhesion 2.
But the precircuit and precocircuit defined by $o$ and $b$ are also $\simeq$-equivalent at $\omega$ for the tree-decomposition $(Q,R^Q)$, contradicting \autoref{ray_case}.
\end{proof}

The key lemma is the following:

\begin{lem}\label{precir_give_scrawl}
 The underlying set $o$ of any $\Phi$-precircuit $(S_o,\hat o)$ is an $N$-scrawl.

Dually, the underlying set $b$ of any $\Phi$-precocircuit $(S_b,\hat b)$ is an $N$-coscrawl.
\end{lem}

In fact the theorem follows easily from this Lemma.

\begin{proof}[Proof that \autoref{precir_give_scrawl} implies \autoref{recon:arbi}.]
By \autoref{precir_give_scrawl}, every $\Phi$-circuit $o'$ is an $N$-scrawl and so includes an $N$-circuit $o$. By minimality, $o'$ must be $o$ since $o$  is the underlying set of the canonical precocircuit of $o$.
Conversely, every $N$-circuit $o$ is an underlying set of its canonical precircuit. Suppose for a contradiction that there is a $\Phi$-precircuit whose underlying set is properly included in $o$. Then this underlying set is an $N$-scrawl, so $o$ properly includes an $N$-circuit, which is impossible. So $o$ is a $\Phi$-circuit. Summing up, the $\Phi$-circuits are precisely the $N$-circuits.

The dual argument shows that the $N$-cocircuits are precisely the $\Phi$-cocircuits.
This completes the proof. 
\end{proof}

In order to prove the first statement of \autoref{precir_give_scrawl}, it suffices by \autoref{is_scrawl} to prove that $|o\cap b|\neq 1$ for any underlying set $o$ of a $\Phi$-precircuit $(S_o,\hat o)$ and any $N$-cocircuit $b$ with canonical
precocircuit  $(S_b,\hat b)$. 
Now suppose for a contradiction that there are such $o$, $(S_o,\hat o)$, $b$ and $(S_b,\hat b)$ with a single element $e$ in $o \cap b$.

\begin{comment}
 The rest of this section is devoted to the proof of \autoref{precir_give_scrawl}.
For the remainder of this section let us fix a $\Phi$-precircuit $(S_o,\hat o)$ with underlying set $o$, and an $N$-cocircuit $b$ with canonical
precircuit  $(S_b,\hat b)$. In order to show the first part of \autoref{precir_give_scrawl}, it suffices to prove that $|o\cap b|\neq 1$ for any such $o$ and $b$.
Now suppose for a contradiction that $|o\cap b|=1$.
\end{comment}

Let $N'$ be the matroid obtained from $N$ by applying \autoref{X1} to the subtree $S_o$ of $T$. Then $\Phi(N', S_o, R\restric_{S_o})$ is just $\Phi$ restricted to the set of ends of $S_o$ by the characterization of the $N'$-circuits in \autoref{X1}. 
Clearly, $(S_o,\hat o)$ is a $\Phi(N', S_o, R\restric_{S_o})$-precircuit.
By \autoref{X1}, the underlying set of $(S_b\cap S_o,\hat b\restric_{S_o})$ is an $N'$-cocircuit.
This construction shows that we may assume without loss of generality that $S_o=T$.

\begin{comment}
 TODO Sort out the construction in the next paragraph.
Use Lemma X1

Let $N'$ be the matroid obtained from $N$ by deleting all elements $e$ that are in matroids associated to
nodes of $T$ that are not in $S_o$. The tree-decomposition $(T,R)$ induces a tree-decomposition $(S_o,R')$ of $N'$ with $R'(v)$ being obtained from $R(v)$ by deleting all virtual elements $e(vw)$ such that $vw\not\in E(S_o)$. Let $\Phi'$ be defined from $N'$ and $(S_o, R')$ just as we defined $\Phi$ from $N$ and $(T, R)$. Note that $b$ restricted to the ground set of $N'$ is a coscrawl, so it includes some cocircuit $b'$ containing $e$. We will show that $(S_o, o')$ is a $\Phi'^*$-precircuit. For each end $\omega$ of $S_o$, there is some circuit $o$ of $N$ with For any $N'$-circuit $o_1$, But $b'$ 
This construction shows that we may assume without loss of generality that $S_o=T$.
\end{comment}

\begin{lem}\label{binary_tree}
The tree $S_b$ includes a subdivision of the binary tree $T_2$.
\end{lem}

Before we prove \autoref{binary_tree}, we need the following definition. Let $t_e$ be the node of $T$ containing $e$.
We say that a directed edge $tt'$ of $S_b$ \emph{points away from $e$} if $t_e$ is not in the subtree $T_{t\to t'}$.
A node of a tree is called a \emph{branching vertex} if it has degree at least 3.

\begin{proof}
The binary tree in $S_b$ will be built in two steps.
First we show that for every directed edge $tt'$ of $S_b$ pointing away from $e$,
the subtree $S_{t\to t'}$ includes a ray, then we show that it contains a branching vertex of $S_b$.
This will then show that $S_b$ includes a binary tree since $S_b$ includes at least one edge incident with $t_e$ as $\hat o(t_e)$ and $\hat b(t_e)$ have to intersect in an element different from $e$. Thus it remains to prove the following two sublemmas.

\begin{sublem}\label{ray_in_S}
 Let $tt'$ be a directed edge $tt'$ of $S_b$ pointing away from $e$.
Then the subtree $S_{t\to t'}$ includes a ray.
\end{sublem}

\begin{proof}
We construct the ray as follows. Let $t_1=t$ and $t_2=t'$, and suppose that $t_i$ is already defined. Then for $t_{i+1}$ we pick a neighbour of $t_i$ in $S_b$ such that $t_it_{i+1}$ points away from $e$. There is such a choice since $\hat o(t_i)$ and $\hat b(t_i)$ have to intersect in an element different from $e(t_{i-1}t_i)$.
Then $(t_i|i\in\Nbb)$ is the desired ray.
\end{proof}

\begin{sublem}\label{branching_in_S}
 Let $tt'$ be a directed edge $tt'$ of $S_b$ pointing away from $e$.
Then the subtree $S_{t\to t'}$ contains a branching vertex of $S_b$.
\end{sublem}

\begin{proof}
Suppose for a contradiction that $S_{t\to t'}$ does not contain a branching vertex.
Then by \autoref{ray_in_S} it is a ray: $S_{t\to t'}=q_1q_2\ldots$. 
By \autoref{X1}, there is a minor $N'$ of $N$ that has a tree-decomposition of adhesion $2$
with tree $S_{t\to t'}$ such that the matroid associated to $q_i$ is $R(q_i)$.
Then $\Phi(N', S_{t\to t'}, R\restric_{S_{t\to t'}})$ is just $\Phi$ restricted to 
the end of $S_{t\to t'}$. 
The restrictions of $(S_o,\hat o)$ and $(S_b,\hat b)$ to $S_{t\to t'}$ give a precircuit and a precocircuit for that new tree-decomposition which are not $\simeq$-equivalent at the end. This contradicts
\autoref{ray_case}, which completes the proof.
\end{proof}
\end{proof}

The strategy for the rest of this proof will be to improve $b$ in two steps, first to $b_1$, then to $b_2$, both satisfying the same conditions as $b$.
Let $(S_{b_1},\hat b_1)$ and $(S_{b_2},\hat b_2)$ be their canonical precocircuits. In the first step we force $S_{b_1}$ to have many vertices of degree 2. In the second step we force all but one of the vertices of $S_{b_2}$ to have degree 2 so that $S_{b_2}$ does not include a binary tree. By \autoref{binary_tree}, we then get the desired contradiction.

Before defining $b_1$, we construct a set $B\se S_b$ satisfying the following conditions.

\begin{enumerate}
 \item $t_e\in B$.
\item Every ray in $S_b$ meets $B$.
\item Every ray in $S_b$ meets some connected component of $S_b\sm B$ such that one of its nodes contains a (non-virtual) element of $b$.
\item Every vertex of $B$ is a branching vertex of $S_b$. 
\end{enumerate}

\begin{lem}\label{there_is_B}
 There exists $B\se S_b$ satisfying 1-4.
\end{lem}

Whilst the conditions 1 and 4 are easy to satisfy, the conditions 2 and 3 are in competition in the sense that putting more nodes into $B$ makes it more likely that condition 2 is true but  less likely that condition 3 is true.

\begin{proof}
We shall construct a sequence of nested rayless subtrees $T_n$ of $S_b$ that will exhaust the whole of $S_b$. The set $B$ will be a union of all the sets of leaves of the $T_n$ with $n$ odd (we will ensure that all such leaves lie outside $T_{n-1}$). 

To start with the construction, let $T_1=T_2=\{t_e\}$. 
Now suppose that $T_n$ is already constructed, where $n$ is even. 
Let $\partial T_n$ be the set of those nodes 
$x$ of $S_b$ that have a neighbour $t(x)$ in $T_n$ but are not in $T_n$, see 
\autoref{fig:construction_B}. 

 \begin{figure}
\begin{center}
 \includegraphics[width=5cm]{./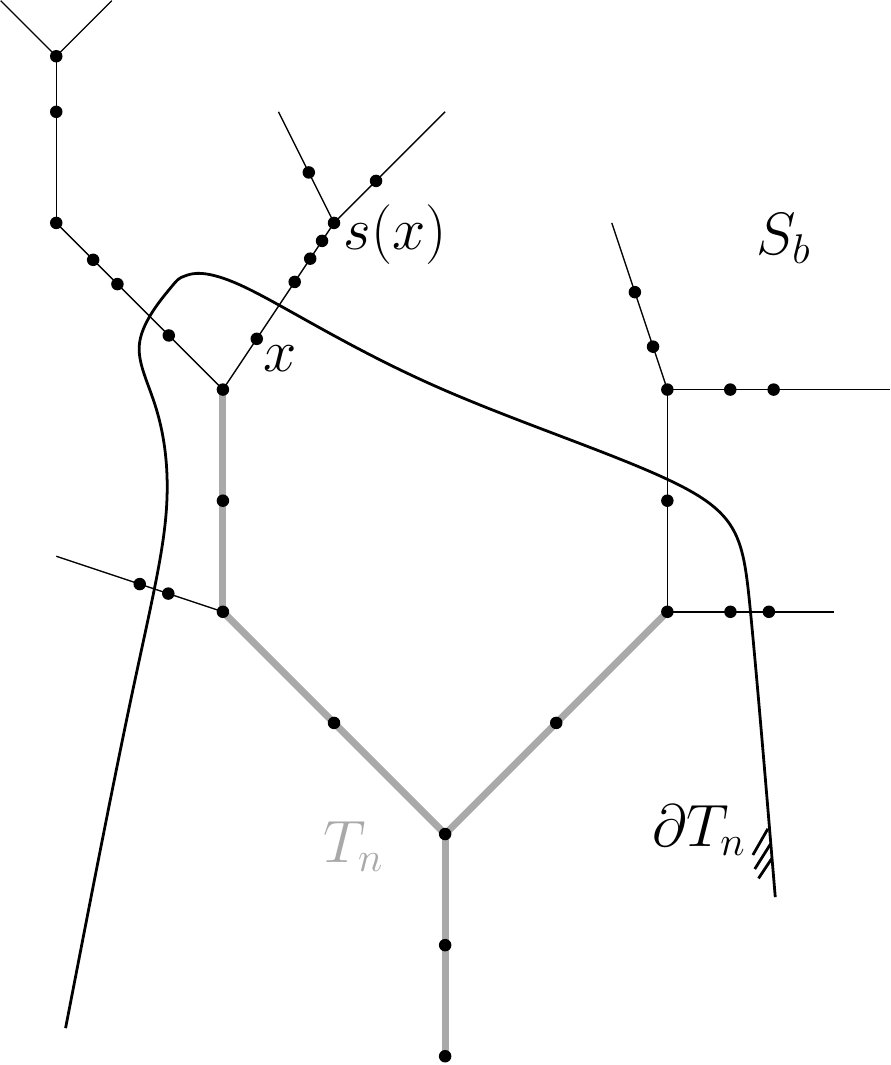}
\end{center} 
\caption{The construction of $T_{n+1}$ for $n$ even.}\label{fig:construction_B}
\end{figure}

By \autoref{branching_in_S}, for each $x\in \partial T_n$ the subtree ${(S_b)}_{t(x)\to x}$ contains a unique branching vertex $s(x)$ that is nearest to $x$. We obtain $T_{n+1}$ from $T_n$ 
by adding for every $x\in \partial T_n$ the unique $s(x)-T_n$-path. 
In particular, $\partial T_n\se T_{n+1}$.

Now assume that $T_n$ is already constructed with $n\geq 3$ odd. 
%Let $\partial T_n$ be the set of those nodes 
%$x$ of $S_b$ that have a neighbour $t(x)$ in $T_n$ but are not in $T_n$.
Since $(S_b,\hat b)$ is the canonical precocircuit for $b$, for each $x\in \partial T_n$ the subtree 
${(S_b)}_{t(x)\to x}$ includes a node $r(x)$ such that $b$ contains a non-virtual element of the matroid associated to $r(x)$, and by the choice of $\partial T_n$ this subtree does not meet $T_n$. 
We obtain $T_{n+1}$ from $T_n$ 
by adding for every $x\in \partial T_n$ the unique $r(x)-T_n$-path. 
This completes the definition of the $T_n$.

As revealed above, $B$ is the union of all the sets of leaves of the $T_n$ with $n$ odd. 
Clearly $B$ satisfies conditions 1 and 4.

Next we show that $B$ satisfies condition 3.
For each $n$, the set $T_{2n}\sm T_{2n-1}$ is a forest with the property that each of its components includes one vertex $r(x)$.
Hence $B$ satisfies condition 3, since each ray meets $T_{2n}\sm T_{2n-1}$ for some sufficiently large $n$.

It remains to show that $B$ satisfies condition 2. 
By construction, the $T_n$ form a sequence of nested rayless subtrees of $S_b$ that exhaust the whole of $S_b$. 
Now let $Q$ be a ray of $S_b$. Thus there is a large number $n$ such that 
$S_b$ has a node in $T_{2n}$. Let $x$ be the last node of $Q$ in $T_{2n+1}$.
By construction of $T_{2n+1}$, the node $x$ cannot have degree 2, so it is branching.
Since $x$ is not in $T_{2n}$, it must be a leaf of $T_{2n+1}$, and thus is in $B$,
which completes the proof.  
 
\begin{comment}
 We shall construct $B$ as a nested union of finite sets $B_n$ where $B_1=\{e\}$.
Now suppose that $B_{n}$ is already constructed. Let $\partial B_n$ be the set of all
 neighbours $\bar t$ in $S_b$ of some $t\in B_n$ such that ${(S_b)}_{t\to \bar t}$ avoids $B_n$. For each $\bar t\in \partial
 B_n$ we pick some $t'\in S_b$ such that $t'\in {(S_b)}_{t\to \bar t}$ and $b$ uses
 a non-virtual element at the matroid associated to $t'$. Note that such a choice is possible since
 $(S_b,\hat b)$ is the canonical precocircuit for $b$. Let $T'$ be the set of such $t'$. Let
 $\partial T'$ be the set of all neighbours $t$ of some $t'\in T'$ such that $t't$ points away from $e$.
For each such $x\in \partial T'$ we pick a node $t_x$ such that $t_x$ is further away from $e$ as $x$ is and $t_x$ is a branching vertex; such a choice 
is possible by \autoref{branching_in_S}.

Now let $B_{n+1}$ be the union of $B_n$ with all such $t_x$.
Let $B=\bigcup_{n\in\Nbb} B_n$. Then clearly $B$ satisfies 1-4 by construction.
\end{comment}

\end{proof}

Now we are in a position to define $b_1$. Let $X=\left[\bigcup_{v\in B} E(v)\right]\sm o$.
Applying $(O3)^*$ to $b,X$ and $e$ we get a cocircuit $b_1$ with $e\in b_1\se b\cup X$ with
$b_1\sm X$ minimal subject to these conditions. Let $(S_{b_1},\hat b_1)$ be the canonical cocircuit for $b_1$.
In the next few lemmas we collect some properties of $b_1$ and $S_{b_1}$.

\begin{lem}\label{b_1_agree}
For any $v\in S_{b_1}\sm B$, we have $\hat b_1(v)=\hat b(v)$.
\end{lem}

\begin{proof}
 It suffices to show that the possible elements for $\hat b_1(v)$ are all in $\hat b(v)$.
Since $S_{b_1}\se S_b$, all possible virtual elements are in $\hat b(v)$.
The possible non-virtual elements are in $\hat b(v)$ since $v\not\in B$.
\end{proof}

\begin{lem}\label{b_1_deg2}
Each $v\in S_{b_1}\cap B$ has degree 2 in $S_{b_1}$.
\end{lem}

\begin{figure}
\begin{center}
 \includegraphics[width=12cm]{./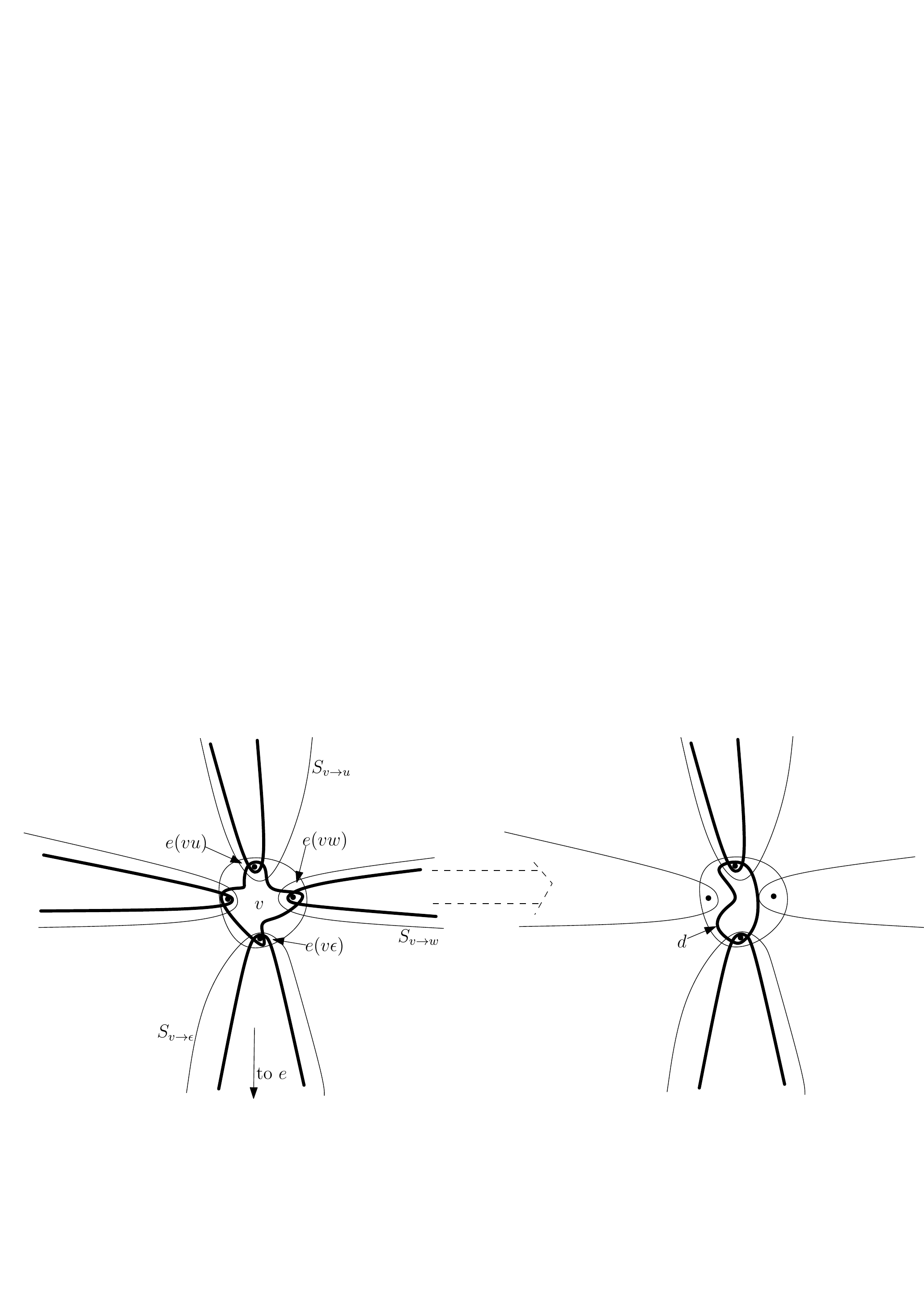}
\end{center} 
\caption{The construction of $(S_a, \hat a)$}\label{fig:Sa}
\end{figure}

\begin{proof}
Suppose for a contradiction, that there is some $v\in S_{b_1}\cap X$ such that its degree is not $2$. Since $S_{b_1}$ contains $t_e$, and $\hat b_1(t_e)$ cannot meet $\hat o(t_e)$ only in  $e$, the tree $S_{b_1}$ has at least one edge. As $S_{b_1}$ is also connected, the degree of $v$ in $S_{b_1}$ is at least $1$. Thus the degree is at least 2 because otherwise 
$\hat b_1(v)$ and $\hat o(v)$ would just intersect in a single element. 

Now suppose for a contradiction that the degree of $v$ in $S_{b_1}$ is at least $3$.
So there are three neighbours $u$, $w$ and $\epsilon$ of $v$ such that 
$e(vu),e(vw),e(v\epsilon)\in \hat b_1(v)$, and
the directed edges $vu$ and $vw$ point away from $e$, whilst $v\epsilon$ points towards $e$.

Let $d$ be an $M(v)$-cocircuit that meets $\hat o(v)$ in precisely $e(vu)$ and $e(v\epsilon)$; such exists by \autoref{o_cap_b}.
Let $S_a={(S_{b_1})}_{v\to u}\cup {(S_{b_1})}_{v\to \epsilon}+v$. 
We obtain $\hat a$ from the restriction of $\hat b_1$ to $S_a-v$ by extending it to $S_a$ via 
$\hat a(v)=d$. This construction is illustrated in \autoref{fig:Sa}.

Applying \autoref{gluing2} twice, once to the separation corresponding to $vu$ and once
to the separation corresponding to $v\epsilon$, we get that $N=N(w)\oplus_2 M(v) \oplus_2 N(\epsilon)$ where the ground set of $N(w)$ is the set of those elements that are in $R(t)$ with $t\in T_{v \to w}$. Thus the precocircuit $(S_a,\hat a)$ is an $N$-cocircuit.

By \autoref{ray_in_S}, the subtree ${(S_{b_1})}_{v\to w}$ includes a ray $Q$. By condition 3, this ray meets a component $D$ of $S_b\sm B$ that includes a non-virtual element.
By \autoref{b_1_agree} we have $D\se {(S_{b_1})}_{v\to w}$, so there is a non-virtual element not in $X$ used by $b_1$ but not by this new $N$-cocircuit. Hence this new cocircuit violates the minimality of $b_1$,
which gives the desired contradiction. Thus $v$ has degree  precisely 2 in $S_{b_1}$.

\end{proof}

\begin{lem}\label{b_1_real}
For any $v\in S_{b_1}\cap B$, the cocircuit $\hat b_1(v)$ includes at least one non-virtual element.
\end{lem}

\begin{proof}
By the property 4 of $B$, the cocircuit $\hat b(v)$ includes at least 3 virtual elements. Hence 
$\hat b_1(v)\neq \hat b(v)$ and $\hat b_1(v)$ has to contain an element not in $\hat b(v)$
which must be real.
\end{proof}

Having collected some properties of $b_1$ and $S_{b_1}$, we next define $b_2$ in a similar way to $b_1$.
Let $B'=V(S_{b_1})\sm B$, and let $X'=\left[\bigcup_{v\in B'} E(v)\right]\sm o$.
Applying $(O3)^*$ to $b_1,X'$ and $e$ we get a cocircuit $b_2$ with $e\in b_2\se b_1\cup X'$ with
$b_2\sm X'$ minimal subject to these conditions. Let $(S_{b_2},\hat b_2)$ be the canonical cocircuit for $b_2$.
Just as before, for every $v\in (S_{b_2}\sm B')-t_e$, we have $\hat b_2(v)=\hat b_1(v)$.

\begin{lem}\label{all_deg_2}
 Each $v\in (S_{b_2}\cap B')-t_e$ has degree 2 in $S_{b_2}$.
\end{lem}

\begin{proof}
This is proved in the same way as \autoref{b_1_deg2} with $B'$, $X'$, $b_2$ and $S_{b_2}$ in place of $B$, $X$, $b_1$ and $S_{b_1}$. Indeed, $B'$ satisfies condition 2 since $B$ satisfies condition 3, and it satisfies condition 3 by \autoref{b_1_real}.
Conditions 1 and 4 were not used in the proof of \autoref{b_1_deg2}.
\end{proof}

Hence every vertex except for $t_e$ has degree 2 in $S_{b_2}$. Hence $S_{b_2}$ does not include a subdivision of the binary tree. Since $b_2$ is a legal choice for $b$, we get a contradiction to \autoref{binary_tree}. This completes the proof of the first part of \autoref{precir_give_scrawl}. The proof of the second part is the dual argument. (At first glace this might look strange since the definition of $\Phi$ is asymmetric. However \autoref{cocir_underlying} deals with this asymmetry and the last proof does not rely on the choice we made when defining $\Phi$.) Hence this completes the proof of \autoref{recon:arbi}.

If $N$ is tame then we need very little local information at the end. In fact, by the comments at the end of the last section, for any end $\omega$ one of $\Phi(Q_{\omega})$ or $(\Phi(Q_{\omega}))^*$ must be empty, and \autoref{tamemain} follows easily.

We need equally little information at the ends if the tree of matroids is planar.

\begin{proof}[Proof that \autoref{cor:planar} follows from \autoref{recon:arbi}]
It is enough to prove \autoref{cor:planar} for a nice ray $\Rcal$ of matroids 
with all local matroids being 2-connected and planar with both virtual elements on the outer face.
Let $o$ be the element of $\Ccal_\infty$ which at each local matroid takes the outer face.
It now suffices to show that any prolonged circuit $o'\in \Ccal_\infty$ of $\Rcal$ is $\simeq$-equivalent to $o$.

Let $b \in \Dcal_\infty$ that is disjoint from $o'$, which exists as in each local matroid there is a cocircuit meeting the local circuit of $o'$ precisely in the virtual elements. 
Then $o\sim b\sim o'$, yielding that $o$ and $o'$ are $\simeq$-equivalent.
\end{proof}

\section{Example showing that all our information at the ends is necessary}\label{example_binary_tree}

Let $T_2$ be the infinite binary tree. 
While the vertices of $T_2$ are the finite 0-1-sequences, the ends of $T_2$ are the infinite ones and $\omega\restric_{n}$ consists of the first n digits of $\omega$. 

We obtain $W$ from $T_2$ by replacing each vertex by two non-adjacent vertices and joining two new vertices by an edge if the vertices they come from are adjacent. 
Then we join the two vertices replacing the root, see \autoref{fig:W}.
Formally $V(G)=T_2\times \{1,2\}$, where $(v,x)$ and $(w,y)$ are adjacent if $v$ and $w$ are or $v=w=\text{root}$.
\begin{figure}[htb]
\begin{center}
 \includegraphics[width=8cm]{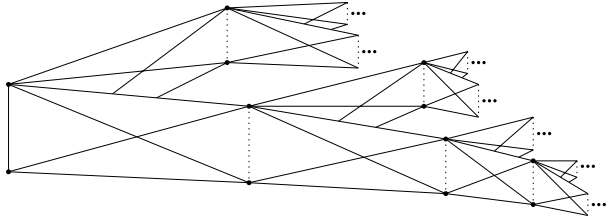}
\end{center} 
\caption{The graph $W$.} \label{fig:W}
\end{figure}

The canonical tree-decomposition of $W$ into 3-connected minors is the following:
\begin{itemize}
\item The decomposition-tree is $\dot T_2$, the binary tree with each edge subdivided;
\item The torsos of the subdivided edges are $K_4$s, where the virtual elements are non-adjacent;
\item The torsos of the other vertices consist of 3 elements in parallel, which are all virtual elements.
\end{itemize}

The \emph{bottom} virtual element of a torso of this tree-decomposition is the one whose corresponding element of $\dot T_2$ is nearer to the root of $\dot T_2$ (the root of $\dot T_2$ is the vertex that is also the root of $T_2$). 
Let $\Tcal$ be the tree of matroids with underlying tree  $\dot T_2$, which has $M(K_4)$s at all subdivision vertices and $U_{1,3}$ at all other vertices. Once more the virtual elements of the $M(K_4)$s are not in a common triad. 

By \autoref{recon:arbi}, every matroid that has $\Tcal$ as its canonical tree of matroids is a $\Phi$-matroid.
Next, we shall derive a more explicit description of the $\Phi$-classes at each end.
There are natural bijections between the ends of $\dot T_2$, $T_2$ and $W$, and we shall suppress them in our notation.

By $\Fcal$ we denote the set of circuits of $M_{TC}(W)$: these are the edge sets of finite cycles of $W$, double rays of $W$ with both tails converging to the same end, and pairs of vertex-disjoint double rays which both go to the same ends.

Let $o\in \Fcal$ converge to the end $\omega$. Then $o$ \emph{induces} a $\{+,-\}$-sequence of infinite length towards $\omega$ as follows:
the $n$-th digit is + if both $(\omega\restric_{n},1) (\omega\restric_{n+1},1)$ and $(\omega\restric_{n},2) (\omega\restric_{n+1},2)$ are elements of $o$, otherwise it is -. Note that it is eventually true that if the $n$-th digit is -, then 
$(\omega\restric_{n},1) (\omega\restric_{n+1},2)$ and $(\omega\restric_{n},2) (\omega\restric_{n+1},1)$ are elements of $o$. 

It is not difficult to check that two circuits are in the same $\Phi$-class at $\omega$ if and only if their induced $\{+,-\}$-sequences agree. 
Thus every $\Phi$-matroid of $\Tcal$ is uniquely determined by a choice  for each end of $T_2$ of a subset of $\{+,-\}^{\Nbb}$ that is closed under finite changes.
For any such choice $\tau$, an element $o$ of $\Fcal$ is $\tau$-legal if $o$ induces only $\{+,-\}$-sequences in $\tau$ at all ends to which $o$ converges. 

In this section we prove that for any such choice $\tau$ we do get a matroid.

\begin{thm}\label{many_matroids}
Let $\tau$ be a choice  for each end of $T_2$ of a subset of $\{+,-\}^{\Nbb}$ that is closed under finite changes.
Then the set $\Ccal$ of  $\tau$-legal elements of $\Fcal$ is the set of circuits of a matroid. 
\end{thm}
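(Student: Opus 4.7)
The plan is to verify the hypotheses of the hybrid axiomatisation \autoref{hybrid}, taking $\Ccal$ to be the given set of $\tau$-legal elements of $\Fcal$ and specifying a matching set $\Dcal$ of prospective cocircuits. For this, I first define, for each end $\omega$ of $T_2$, the set $\tau^\ast(\omega) = \{+,-\}^{\Nbb} \setminus \tau(\omega)$; it is closed under finite changes because $\tau(\omega)$ is. By the self-duality of $K_4$ and the symmetric way virtual elements sit inside the torsos of $\Tcal$, any cocircuit $b$ of the tree of matroids $\Tcal$ that converges to an end $\omega$ induces a $\{+,-\}$-sequence at $\omega$ exactly as circuits do, and two such cocircuits are $\simeq$-equivalent at $\omega$ iff their induced sequences agree eventually. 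Let $\Dcal$ be the set of all cocircuit-patterns of $\Tcal$ whose induced sequence at every end they reach lies in $\tau^\ast$.

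The axioms (C1) and (C2) for both $\Ccal$ and $\Dcal$ are immediate from the corresponding statements for $M_{TC}(W)$ and $M_{TC}(W)^\ast$. For (O1), suppose $o \in \Ccal$ and $b \in \Dcal$. If $o$ and $b$ are both finite, the property $|o \cap b| \ne 1$ follows from $M_{FC}(W)$ being a matroid. Otherwise, $o$ and $b$ both converge to some common end $\omega$ (if they do not, one can use the tree-decomposition and a straightforward finite argument as in \autoref{precir_give_scrawl}). At such a common end, the induced sequences of $o$ and $b$ lie in the disjoint sets $\tau(\omega)$ and $\tau^\ast(\omega)$, so they differ in infinitely many coordinates, forcing $|o \cap b| = \infty$. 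For (CM), I would mimic the argument used to prove (CM) in \autoref{sec:ray_case}: extend any independent $I \subseteq X$ to a maximal $M_{\emptyset}(\Tcal)$-independent subset $J$ of $X$ using (IM) in that matroid, and then, if $J$ accidentally contains an infinite circuit $o$, delete a single element of $o \setminus I$; the proof that this modified $J'$ remains maximal proceeds as in that lemma, now using circuit elimination supplied by (O2) below.

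The serious step is (O2): given a partition $E = P \dot\cup Q \dot\cup \{e\}$ with $e$ sitting in torso $M(t_e)$, we must produce through $e$ either a $\tau$-legal circuit inside $P + e$ or a $\tau^\ast$-legal cocircuit inside $Q + e$. I would proceed by a branch analysis on $\dot T_2$. For each neighbour $t$ of $t_e$, call the virtual element $e(t_e t)$ \emph{$P$-reachable} if the tree of matroids $\Tcal_{t_e \to t}$ admits a $\tau$-legal precircuit through $e(t_e t)$ whose underlying set lies in $P$; otherwise call it \emph{$Q$-reachable} --- one checks, using the ray-case result \autoref{ray_case} along any ray of $\dot T_2$ starting at $t$, that in the not-reachable case there is a $\tau^\ast$-legal precocircuit through $e(t_e t)$ with underlying set in $Q$. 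Applying (O2) inside the finite torso $M(t_e)$ to the pattern of reachable/not-reachable virtual neighbours (together with $P \cap E(t_e)$ versus $Q \cap E(t_e)$), we obtain either a circuit $\hat o(t_e)$ of $M(t_e)$ through $e$ whose virtual elements are all $P$-reachable --- which extends by gluing to a $\tau$-legal circuit in $P + e$ --- or dually a cocircuit of $M(t_e)$ that assembles to a $\tau^\ast$-legal cocircuit in $Q + e$.

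The main obstacle, and by far the most delicate part, will be (O2), in particular the branchwise reduction: verifying that ``$P$-reachability'' of a virtual element is really a clean dichotomy and that the dual witness one obtains on a failing branch is globally $\tau^\ast$-legal across every end of that branch. The argument should be essentially a tree-recursion in which every recursive appeal terminates in an application of \autoref{ray_case} --- the hypothesis that $\tau(\omega)$ is closed under finite changes at every end is exactly what is needed to make the recursive gluing compatible, since local modifications inside any finite subtree never change the $\simeq$-class of a precircuit at any end.
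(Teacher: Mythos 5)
The core of your proposal breaks down already at the definition of $\Dcal$, and the error propagates into (O1). You take $\tau^\ast(\omega)$ to be the set-theoretic complement of $\tau(\omega)$, and then argue that if $o$ has pattern in $\tau(\omega)$ and $b$ has pattern in $\tau^\ast(\omega)$, closure under finite changes forces the two $\{+,-\}$-sequences to differ at infinitely many coordinates, ``forcing $|o\cap b|=\infty$.'' But the implication is exactly backwards. Fix a $K_4$ torso along a ray to $\omega$, with virtual elements the non-adjacent pair $\{v_1,v_2\}$, and write $\{p_1,p_2\}$ and $\{q_1,q_2\}$ for the two remaining non-adjacent pairs. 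A circuit of the torso through both virtuals uses either $\{p_1,p_2\}$ or $\{q_1,q_2\}$; by self-duality of $K_4$, the same is true of a cocircuit through both virtuals. When $o$ and $b$ choose the \emph{same} pair at a torso they meet there in two (non-virtual) elements, and when they choose \emph{different} pairs they meet in none. Thus ``differ at coordinate $n$'' means ``disjoint at torso $n$,'' so your hypothesis that the sequences differ in all but finitely many coordinates implies $o\cap b$ is \emph{finite}. Concretely, take $\tau(\omega)=\{\text{eventually }+\}$ at the all-$0$ end of $T_2$; then the double ray $o=\{e\}\cup\bigcup_n\{p_1^{(n)},p_2^{(n)}\}$ lies in $\Ccal$, and there is a bond $b$ of $W$ through $e$ whose pattern along $\omega$ is all $-$'s (a $\Phi^\ast$-precocircuit that takes $\{q_1^{(n)},q_2^{(n)}\}$ at each level and dives into a vertex star at each off-$\omega$ child); its pattern is in your $\tau^\ast(\omega)$ but $o\cap b=\{e\}$, so (O1) actually \emph{fails} for your $\Dcal$. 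What is true (and follows from the relation $o\sim b\iff$ the sequences are eventually opposite, as computed for $\Qcal$ in \autoref{repQ}) is that the correct $\Phi^\ast$ corresponds to the \emph{sign-flipped} complement of $\tau$, not the plain complement. The paper sidesteps all of this by defining $\Dcal$ directly as the set of bonds having infinite intersection with every $\tau$-legal circuit converging to a common end, which makes (O1) immediate from the fact that a topological cycle and a bond meeting in a single edge must share an end.

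The other two verifications are also not as you sketch them. For (CM), the ray-case argument you want to mimic removes a single element $e$ from a single infinite circuit $o$ and then shows $J'=J-e$ includes no element of $\bigcup\Phi$ at all; this depends on the fact that a ray has only one end, so that any second infinite circuit in $J$ would force a finite circuit inside $J$, a contradiction. In the binary tree $T_2$ a maximal $M_\emptyset$-independent set can include infinitely many infinite circuits running to distinct ends, and removing one element only destroys those through that element; there is no single deletion that does the job. The paper's actual (CM) argument is quite different: it contracts $J'$, passes to a graph $W''$ whose tree of torsos is 2-sums of copies of a small $K$, identifies the resulting circuit family as a $\Psi$-circuit set, and invokes the general existence theorem from~\cite{BC:psi_matroids}. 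For (O2), your ``$P$-reachable / $Q$-reachable'' branch dichotomy is precisely the (O2) statement recursed onto the subtrees $\Tcal_{t_e\to t}$, and each of those subtrees is again a binary tree of matroids, so the recursion has no termination and cannot be closed off by \autoref{ray_case}, which is a ray-only result. The paper instead constructs a candidate bond $b$ explicitly from the partition (via the ``blocking'' and ``undecided'' classification of subdivided edges), and then, for any $\tau$-legal circuit meeting $b$ finitely, modifies it at finitely many undecided torsos to obtain the desired circuit inside $P+e$; the closure of $\tau$ under finite changes is what licenses exactly this modification.
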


First we need some preparation.
The edge set of $K_4$ admits a unique 3-partition $E(K_4)=X_1\dot \cup X_2\dot \cup X_3$ into three pairs of non-adjacent edges. 
We shall need the following property of $K_4$:

\begin{rem}\label{parallel_edges}
Let $Y_1\dot \cup Y_2$ be a partition of $X_1\dot \cup X_2$, and let $e\in X_3$. Then $Y_1$ spans $e$, or $Y_2$ cospans $e$ or 
$\{X_1,X_2\}=\{Y_1,Y_2\}$.
\qed  
\end{rem}

\begin{proof}[Proof of \autoref{many_matroids}.]
Let $\Dcal$ be the set of those bonds $b$ of $W$ that have infinite intersection with every $\tau$-legal element
of $\Ccal$ converging to some end to which $b$ converges.
We shall apply \autoref{hybrid} in order to show that $\Ccal$ is the set of circuits of a matroid. Thus it remains to show that $\Ccal$ and $\Dcal$ satisfy (O2) and (CM). 
Indeed, (O1) is clear since if a topological cycle and a bond of a locally finite graph intersect just in a single edge, then there is an end to which they both converge, see for example \cite[Lemma 2.6]{BC:ubiquity}.\footnote{In this section all topological cycles will have that shape. However in general graphs topological cycles may be more complicated, see \cite{RD:HB:graphmatroids} for a definition of the topological-cycle matroid of a locally finite graph.} 

First we check (O2) for some partition $E=P\dot \cup Q\dot\cup \{e\}$, where $e$ is the edge joining the two vertices replacing the root of $T_2$. We may assume that there is no finite $o\in \Ccal$ with $e\in o\se P+e$ and that there is no finite 
$b\in \Dcal$ with $e\in b\se Q+e$. 

We say that a subdivided edge of $\dot T_2$ is \emph{blocking} if 
 the bottom virtual element of its torso is cospanned by the elements of that torso that are in $Q$.
In order to study the torsos that are isomorphic to $K_4$, we shall use the 3-partition of $E(K_4)$ discussed above, where we follow the convention that $X_3$ consists of the virtual elements. 
We say that a subdivided edge $u$ of $\dot T_2$ is \emph{undecided} if 
 $\{P\cap E(u),Q\cap E(u)\}=\{X_1,X_2\}$.

Let $U$ be the set of all subdivided edges of $\dot T_2$ that are not separated by a blocking vertex from the root. 
Any $t\in U$ is undecided. To see this just consider the first subdivided edge above the root and below $t$ that is not undecided and apply \autoref{parallel_edges}. 

Let $\bar U$ be the set of those blocking vertices that only have vertices in $U$ below them.
For each $r\in \bar U$ we pick a bond $b_r$ witnessing that $Q\cap E(r)$ cospans the bottom element of the torso at $r$. 

Let $b'$ be the set of all edges of $Q$ that are in torsos for nodes in $U$ together with the non-virtual elements of all the bonds $b_r$ with $r\in \bar U$.
By construction $b=b'+e$ is a bond of $W$ containing $e$.
We may assume that some $\tau$-legal element $o$
of $\Ccal$ has finite intersection with $b$ and converges to some end $\omega$ to which $b$ converges.
Relying on the particular description of $\Fcal$, we can change $o$ so that we may assume without loss of generality that $o$ is a double ray and has only the end $\omega$ in its closure\footnote{We say that an end $\omega$ of a graph $G$ is in the \emph{closure of} an edge set $F$ if $F$ cannot be separated from $\omega$ by removing finitely many vertices from $G$.}. 
We get $o'$ from $o$ by changing $o$ to $E(u)\cap P$ at the finitely many torsos $u$ at which $o$ intersects $b$.  
Since all these torsos are undecided, $o'$ is a double ray and it is $\tau$-legal as $\tau$ is closed under finite changes. 
Thus $o'$ witnesses (O2) in this case.

Having proved (O2) in the case where $e$ is the edge joining the two vertices replacing the root of $T_2$, it remains to consider the case where is $e$ is an edge of some torso $u$. Let $S$ and $S'$ be the two components of $\dot T_2$ $- u$. The above argument yields (O2) at the virtual element of $\Tcal$ restricted to $S$. 
This and the corresponding fact for $S'$ imply (O2) in this case. We leave the details to the reader. 

Having proved (O2), it remains to prove (CM). For that let $I\se X\se E(W)$ be given. 
Let $Z$ be the set of those subdivided edges $z$ of $\dot T_2$ such that $X$ contains no non-virtual element of $E(z)$.
Let $u$ be a subdivided edge of $\dot T_2$ not in $Z$.
Then we may assume that $I$ contains at least one edge in $E(u)$ since the set of the 4 non-virtual elements of $E(u)$ meets no element of $\Fcal$ just once. 
Let $L$ consist of one edge of $I$ for each such torso.

Let $W'=W- X$. Let $D$ be a connected component of $W'$. We abbreviate $I'=I\cap E(D)$, $L'=L\cap E(D)$ and $X'=X\cap X'$, and our intermediate aim is to construct a set witnessing (CM) for $I'$ and $X'$. 

Let $W''=D'/J'$.
Let $K$ be the torso-matroid obtained from $M(K_3)$
by adding the two virtual elements in parallel to two distinct elements. Note that $W''$ has a tree-decomposition along a subgraph of $\dot T_2$
such that all torsos at subdivided edges are $K$, the other torsos are  $U_{0,1}$ at leaves and else $U_{1,3}$.
By doing Whitney-flips\footnote{The \emph{Whitney-flip} of a graph $G$ which is a 2-sum of two graphs $H_1$ and $H_2$ is the other graph that is also a 2-sum of these graph buts with the endvertices of the gluing element identified the other way round.} at virtual elements if necessary, we may assume that each vertex of $W''$ has finite degree. 
Let $\Ccal''$ consist of those topological circuits $o$ of $W''$ such that there is a subset $j$ of $J'$ such that $o\cup j\in \Ccal$. 

Note that if two elements of $\Ccal''$ have the same end in their closure, then their rays to that end eventually agree. 
It is easy to check for a topological circuit $o$ of $W''$ that if all its ends are also in the closure of (possibly two different) elements of $\Ccal''$, then
$o$ is in $\Ccal''$.
Let $\Psi$ be the set of ends in the closure of elements of  $\Ccal''$. By the above argument,  $\Ccal''$ is the set of $\Psi$-circuits. 
By \cite[Lemma 6.11]{BC:psi_matroids}, $\Ccal''$ is the set of circuits of a matroid (Actually the matroid here is a contraction-minor of the matroid in that theorem).
So let $J'$ witness (CM) for $I'\sm L'$ and $X'\sm L'$. 
Then $J'\cup L'$ witnesses (CM) for $I'$ and $X'$.
Furthermore it is easy to see that the union of all these witnesses for the different components $D$ of $W'$ witnesses (CM) for $I$ and $X$.

\end{proof}

\bibliographystyle{plain}
\bibliography{literatur}
\end{document}